\documentclass[11pt]{article}

\usepackage{amssymb, amsmath, amsthm, graphicx}
\usepackage[left=1in,top=1in,right=1in]{geometry}

\usepackage{subfigure}

\theoremstyle{plain}
      \newtheorem{theorem}{Theorem}[section]
      \newtheorem{lemma}[theorem]{Lemma}
      
      \newtheorem{observation}[theorem]{Observation}
      \newtheorem{corollary}[theorem]{Corollary}

\theoremstyle{definition}

\theoremstyle{remark}

	\newcommand{\RR}{{\mathbb R}}

\def\sgn{\mbox{\rm sgn}}
\def\conv{\mbox{\rm conv}}

\title{A polynomial regularity lemma for semi-algebraic hypergraphs and its applications in geometry and property testing\footnote{A preliminary version of this paper appeared in SODA 2015 \cite{FoPaSu}.}}
\author{Jacob Fox \thanks{Stanford University, Stanford, CA. Supported by a Packard Fellowship, by NSF CAREER award DMS 1352121, and by an Alfred P. Sloan Fellowship. Email: {\tt jacobfox@stanford.edu.}} \and J\'anos Pach\thanks{EPFL, Lausanne and Courant Institute, New York, NY. Supported
by a Hungarian Science Foundation NKFI grant, by Swiss National Science Foundation Grants 200021-165977 and 200020-162884.  Email:
{\tt pach@cims.nyu.edu}.}\and Andrew Suk\thanks{University of Illinois at Chicago, Chicago, IL.  Supported by NSF grant DMS-1500153.   Email: {\tt suk@uic.edu}.} }

\begin{document}

\maketitle

\begin{abstract}

In this paper, we prove several extremal results for geometrically defined hypergraphs.  In particular, we establish an improved lower bound, single exponentially decreasing in $k$, on the best constant $\delta>0$ such that the vertex classes $P_1,\ldots,P_k$ of every {\em $k$-partite} $k$-uniform semi-algebraic hypergraph $H=(P_1\cup\ldots\cup P_k, E)$ with $|E|\ge\varepsilon\Pi_{j=1}^k|P_i|$ have, for $1 \leq i \leq k$, $\delta|P_i|$-element subsets $P'_i\subseteq P_i$ satisfying  $P_{1}'\times\ldots\times P_{k}'\subseteq E$. The best previously known lower bound on $\delta$ due to Bukh and Hubard decreased triple exponentially fast in $k$. We give three geometric applications of our results. In particular, we establish the following strengthening of the so-called same-type lemma of B\'ar\'any and Valtr: Any disjoint finite sets $P_1,\ldots,P_k\subset \mathbb{R}^d\; (k>d)$ have, for $1 \leq i \leq k$, subsets $P'_i$ of size at least $2^{-O(d^3k\log k)}|P_i|$ with the property that every $k$-tuple formed by taking one point from each $P'_i$ has the same order type.

We also improve a result of Fox, Gromov, Lafforgue, Naor, and Pach, who established a regularity lemma for semi-algebraic $k$-uniform hypergraphs of bounded complexity, showing that for each $\varepsilon>0$ the vertex set can be equitably partitioned into a bounded number of parts (in terms of $\varepsilon$ and the complexity) so that all but an $\varepsilon$-fraction of the $k$-tuples of parts are homogeneous. Here, we prove that the number of parts can be taken to be polynomial in $1/\varepsilon$. Our improved regularity lemma can be applied to geometric problems and to the following general question on property testing: is it possible to decide, with query complexity polynomial in the reciprocal of the approximation parameter, whether a hypergraph has a given hereditary property? We give an affirmative answer for testing typical hereditary properties for semi-algebraic hypergraphs of bounded complexity.

\end{abstract}

\section{Introduction}

It is known in Ramsey theory that in any bipartite graph $G = (U\cup V,E)$ with $|U| = |V| = n$, there are two large subsets $U'\subset U$, $V'\subset V$ with $|U'|,|V'| \geq \Omega(\log n)$ such that either every vertex in $U'$ is connected to every vertex in $V'$, or no vertex in $U'$ is connected to any vertex in $V'$.  In other words, we have either $U'\times V' \subset E$ or $U'\times V' \cap E = \emptyset$.  However, much stronger theorems are known for geometrically defined bipartite graphs.  For instance, it was shown in \cite{PS5} that if $G$ is an intersection graph of two $n$-element sets of segments, $U$ and $V$, in the plane, then one can find subsets $U'\subset U$ and $V'\subset V$ with the above properties such that $|U'|, |V'| \geq \Omega(n)$. Moreover, in this latter example, it is also true that for a fixed $\varepsilon > 0$ if $|E(G)|\geq \varepsilon n^2$, then we can find $U'\subset U$ and $V'\subset V$ with $|U'| = |V'|= \Omega(n)$, such that $U'\times V' \subset E$ (here the hidden constant depends on $\varepsilon$).

The above results have been generalized to hypergraphs.  A $k$-uniform hypergraph $H = (P,E)$ consists of a vertex set $P$ and an edge set (or hyperedge set) $E$, which is a collection of $k$-element subsets of $P$.  A hypergraph is {\it $k$-partite} if it is $k$-uniform and its vertex set $P$ is partitioned into $k$ parts, $P=P_1\cup\ldots\cup P_k$, such that every edge has precisely one vertex in each part. It follows from a classical theorem of Erd\H os~\cite{erdos1}, which was one of the starting points in extremal hypergraph theory, that if $|P_1|=\ldots=|P_k|$ and $|E|\ge \varepsilon\Pi_{i=1}^k|P_i|$ for some $\varepsilon>0$, then one can find subsets $P'_i\subset P_i$ such that
$$|P'_i| = \Omega\left(\frac{\log |P_i|}{\log (1/\varepsilon)}\right)^{1/(k-1)},$$
and $P'_1\times \cdots \times P'_k\subset E$. In other words, $H$ contains a large {\em complete} $k$-partite subhypergraph.

Just like for bipartite graphs, larger complete $k$-partite subhypergraphs can be found in geometrically defined hypergraphs.  In particular, this is the case for hypergraphs that admit a simple algebraic description.  To make this statement precise, we need some terminology.

\medskip

\noindent{\bf Semi-algebraic setting.} A $k$-partite hypergraph $H = (P_1\cup\ldots\cup P_k,E)$ is called \emph{semi-algebraic} in $\RR^d$, if its vertices are points in $\RR^d$, and there are polynomials $f_1,\ldots,f_t \in \mathbb{R}[x_1,\ldots,x_{kd}]$ and a Boolean function $\Phi$ such that for every $(p_1,\ldots, p_k) \in P_1\times \cdots \times P_k$, we have
$$(p_1,\ldots,p_k) \in E \hspace{.5cm}\Leftrightarrow\hspace{.5cm} \Phi(f_1(p_1,\ldots,p_k) \geq 0;\ldots;f_t(p_1,\ldots,p_k) \geq 0) = 1.$$
At the evaluation of $f_j(p_1,\ldots,p_k)$, we substitute the variables $x_1,\ldots,x_k$ with the coordinates of $p_1$, the variables $x_{k+1},\ldots,x_{2k}$ with the coordinates of $p_2$, etc.

We say that $H$ has \emph{complexity} $(t,D)$ if each polynomial $f_j$ with $1 \leq j \leq t$ has the property that for any fixed $k-1$ points $q_1,\ldots,q_{k-1} \in \mathbb{R}^d$, the $d$-variate polynomials

$$\begin{array}{ccl}
  h_{j,1}(\textbf{x}_1) & = & f_j(\textbf{x}_1,q_1,\ldots,q_{k-1}), \\
  h_{j,2}(\textbf{x}_2) & = & f_j(q_1,\textbf{x}_2,q_2,\ldots,q_{k-1}), \\
   & \vdots &  \\
  h_{j,k}(\textbf{x}_k) & = & f_j(q_1,\ldots,q_{k-1},\textbf{x}_k)
\end{array}$$

\noindent are of degree at most $D$ (in notation, $deg(h_{j,i}) \leq D$ for $1\le j\le t$ and $1\le i\le k$). It follows that $deg(f_j) \leq kD$ for every $j$.

If our $k$-uniform hypergraph $H = (P,E)$ is {\em a priori} not $k$-partite, we will assume that its relation $E$ is \emph{symmetric}.  More precisely, for any fixed enumeration $p_1, p_2,\ldots$ of the elements of $P\subset\mathbb{R}^d$, we say that $H$ is {\em semi-algebraic} with complexity $(t,D)$ if for every $1\leq i_1 < \cdots < i_k \leq n$, $(p_{i_1},\ldots,p_{i_k}) \in E$ iff for all permutation $\pi$,

$$ \Phi(f_1(p_{\pi(i_1)},\ldots,p_{\pi(i_k)}) \geq 0,\ldots, f_t(p_{\pi(i_1)},\ldots,p_{\pi(i_k)}) \geq 0) = 1,$$

\noindent where $\Phi$ is a Boolean function and $f_1,\ldots,f_t$ are polynomials satisfying the same properties as above.

\medskip
\noindent{\bf Density theorem for semi-algebraic hypergraphs.}
Fox {\em et al.}~\cite{gromov} showed that there exists a constant $c = c(k,d,t,D)>0$ with the following property. Let $(P_1\cup\ldots\cup P_k,E)$ be any $k$-partite semi-algebraic hypergraph in $\RR^d$ with complexity $(t,D)$, and suppose that $|E|\ge \varepsilon\Pi_{i=1}^k|P_i|$. Then one can find subsets $P'_i \subset P_i$, $1\leq i \leq k$, with $|P'_i| \geq \varepsilon^{c}|P_i|$, which induce a complete $k$-partite subhypergraph, that is, $P'_1\times \ldots \times P'_k \subset E$. The original proof gives a poor upper bound on $c(k,d,t,D)$, which is of tower-type in $k$.  Combining a result of Bukh and Hubard \cite{hubard} with a variational argument of Koml\'os~\cite{sz}, see also Section 9.4 in \cite{matousek}, the dependence on $k$ can be improved to double exponential. Our following result, which will be proved in Section 2, removes the dependency on $k$ in the exponent of $\varepsilon$.

\begin{theorem}\label{main0}

For any positive integers $d,t,D$, there exists a constant $C=C(d,t,D) > 0$ with the following property. Let $\varepsilon>0$ and let $H = (P_1\cup\ldots\cup P_k,E)$ be any $k$-partite semi-algebraic hypergraph in $\RR^d$ with complexity $(t,D)$ and $|E| \geq \varepsilon\Pi_{i=1}^k|P_i|$. Then one can choose subsets $P'_i \subset P_i$, $1\leq i \leq k$, such that

$$|P'_i| \geq \frac{\varepsilon^{{d + D \choose d}}}{C^k}|P_i|,$$

\noindent and $P'_1\times \ldots \times P'_k \subseteq E$.  Moreover, we can take $C = 2^{20m\log (m+1)}t^{m/k}$, where $m =  {d + D\choose d} - 1$.

\end{theorem}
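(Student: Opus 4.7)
The strategy is induction on $k$, organized by linearization via the Veronese embedding. Setting $m = \binom{d+D}{d} - 1$, the number of nonconstant monomials of degree at most $D$ in $d$ variables, the key observation is that fixing any $k-1$ of the $k$ vector arguments of a defining polynomial $f_j$ yields a polynomial of degree at most $D$ in the remaining $d$ variables, hence a linear combination of the $m+1$ basic monomials. Composing each $P_i$ with the Veronese map $v \colon \RR^d \to \RR^{m+1}$, each inequality becomes a halfspace condition in $\RR^{m+1}$ whose coefficient vector depends semi-algebraically on the remaining arguments. This reduction is responsible both for the exponent $m+1 = \binom{d+D}{d}$ on $\varepsilon$ in the conclusion and for the presence of $m$ in the constant $C$.

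The base case $k=2$ reduces to a bipartite density statement for halfspace arrangements in $\RR^{m+1}$: given $A \cup B$ with edge density at least $\varepsilon$, where each $b \in B$ determines its neighborhood in $A$ as the intersection of at most $t$ halfspaces in $\RR^{m+1}$, one finds $A' \subseteq A$ and $B' \subseteq B$ of sizes at least $c \varepsilon^{m+1}|A|$ and $c \varepsilon^{m+1}|B|$ with $A' \times B' \subseteq E$. This is the content of a cutting / simplicial-partition argument in $\RR^{m+1}$; the exponent $m+1$ comes from the dimension of the linearized space, and an $O(t^m)$ cell-complexity bound in $\RR^{m+1}$ supplies the $t^m$ factor that is ultimately amortized as $t^{m/k}$ per step in the final constant.

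For the inductive step from $k-1$ to $k$, average over $p_1 \in P_1$ to extract a subset $P_1^\star$ of size at least $(\varepsilon/2)|P_1|$ whose members have links of density at least $\varepsilon/2$; crucially, each link is a $(k-1)$-partite semi-algebraic hypergraph on $P_2 \cup \cdots \cup P_k$ of the \emph{same} complexity $(t,D)$, because fixing one vector argument does not raise the per-coordinate degree of the defining polynomials. Apply the inductive hypothesis to each link, yielding a complete product $Q_2(p_1) \times \cdots \times Q_k(p_1)$. The decisive step is to align these $p_1$-dependent outputs into a single common product $P'_2 \times \cdots \times P'_k$ contained in $Q_i(p_1)$ for a large subset $P'_1 \subseteq P_1^\star$. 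The main obstacle is that $Q_i(p_1)$ is not manifestly semi-algebraic in $p_1$, so one cannot apply the bipartite tool to the relation $p_i \in Q_i(p_1)$ naively; the natural resolution is to strengthen the inductive statement so that the returned product sets are always cells of a canonical polynomial partition determined by the input, making the alignment relation genuinely semi-algebraic of bounded complexity. Each alignment then contributes a factor of $2^{O(m \log m)}$ and the $k$-fold accumulation gives the $C^k$ denominator while keeping the exponent of $\varepsilon$ fixed at $m+1$.
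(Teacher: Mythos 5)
Your base case and the overall linearization framework match the paper: Veronese embedding into $\RR^m$ with $m=\binom{d+D}{d}-1$, followed by the Chazelle--Friedman cutting lemma to produce a simplicial subdivision in which some cell captures an $\varepsilon^{m+1}$-fraction of one side and a constant fraction of the other. The gap is in your inductive step. You take links over $p_1\in P_1$, apply induction to each link, and then propose to ``align'' the $p_1$-dependent outputs $Q_2(p_1)\times\cdots\times Q_k(p_1)$. Even granting your proposed fix --- strengthening the induction so that the outputs are cells of a canonical partition --- the alignment cannot cost only a factor $2^{O(m\log m)}$ as you assert. Any canonical partition fine enough that its cells are homogeneous at density scale $\varepsilon$ must have $\mathrm{poly}(1/\varepsilon)$ cells per coordinate, so pigeonholing $P_1^\star$ over the $(k-1)$-tuple of chosen cells loses a factor $\varepsilon^{\Omega(m(k-1))}$; alternatively, treating $p_i\in Q_i(p_1)$ as a new semi-algebraic bipartite relation of density $\approx\varepsilon^{m+1}$ and applying the base-case tool squares the exponent. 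Either way the exponent of $\varepsilon$ grows with $k$, whereas the theorem requires it to stay at $\binom{d+D}{d}$ for all $k$.

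The paper's induction avoids this entirely by a different decomposition: it sets $Q=P_1\times\cdots\times P_{k-1}$ and applies the bipartite lemma to the graph $(Q,P_k,E)$, placing the full $\varepsilon^{m+1}$ loss on $P_k$ and only a constant-factor loss ($\varepsilon\to\varepsilon/8$) on the product side. The returned simplex $\Delta\subset\RR^m$ is then used to define a new relation $E_1$ on $P_1\times\cdots\times P_{k-1}$: a tuple is in $E_1$ iff it is joined to one fixed $q\in P_k'$ and all $t$ of its linearized hyperplanes avoid $\Delta$, the latter being expressible by sign conditions of $f_i^\ast$ at the $\le m+1$ vertices of $\Delta$. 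This makes $E_1$ semi-algebraic of complexity $(t(m+2),D)$ --- note the complexity grows by a factor $m+2$ per level, which is where the $(m+2)^{km}$ and $t^{m}$ factors in $C^k$ come from, and which contradicts your claim that the passed-down relation keeps complexity $(t,D)$ --- while its density is at least $\varepsilon/8$. Induction on $E_1$ then keeps the exponent of $\varepsilon$ fixed at $m+1$ and accumulates only constant factors per level. You would need to replace your link-and-align step with this (or an equivalent) device for the claimed bound to follow.
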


The main novelty in the proof of Theorem \ref{main0} is that we completely avoid using the arguments of Koml\'os \cite{sz}, which were used in \cite{noga} and \cite{gromov}, in his proof of a variant of Szemer\'edi's regularity lemma.  Instead, we combine an inductive argument on $k$ with an old result on cell decomposition in order to remove the dependency on $k$ in the exponent of $\varepsilon$.

For the applications given in Section \ref{app}, the dependence on the dimension $d$ becomes crucial. This is typically the case for relations that have complexity $(t,1)$ (i.e., when $D = 1$).  Substituting $D=1$ in Theorem \ref{main0}, we obtain the following.

\begin{corollary}\label{main2}
Let $\varepsilon>0$ and let $H = (P_1\cup\ldots\cup P_k,E)$ be a $k$-partite semi-algebraic hypergraph in $\mathbb{R}^d$ with complexity $(t,1)$ and $|E| \geq \varepsilon\Pi_{i=1}^k|P_i|$. Then one can choose subsets $P'_i \subset P_i$, $1\leq i \leq k$, such that

$$|P'_i| \geq \frac{\varepsilon^{d + 1}}{2^{20kd\log (d + 1)}t^{d }}|P_i|,$$

\noindent and $P'_1\times \cdots \times P'_k \subseteq E$.
\end{corollary}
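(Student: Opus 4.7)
The plan is to derive the corollary as an immediate specialization of Theorem~\ref{main0} with $D=1$. No new argument is required; the only work is a numerical simplification of the constants appearing in that theorem.

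First I would compute the exponent of $\varepsilon$. In Theorem~\ref{main0} this exponent equals $\binom{d+D}{d}$, and setting $D=1$ yields $\binom{d+1}{d} = d+1$, which matches the exponent appearing in the corollary.

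Next I would compute the prefactor $C^k$. Theorem~\ref{main0} provides the explicit constant $C = 2^{20m\log(m+1)}\, t^{m/k}$ with $m = \binom{d+D}{d}-1$. With $D=1$ this gives $m = d$ and therefore $C = 2^{20d\log(d+1)}\, t^{d/k}$, so
\[
C^k \;=\; 2^{20kd\log(d+1)}\, t^{d}.
\]
Substituting these values into the bound $|P_i'| \geq \varepsilon^{\binom{d+D}{d}}|P_i|/C^k$ from Theorem~\ref{main0} yields precisely the inequality stated in Corollary~\ref{main2}, with $P_1'\times\cdots\times P_k' \subseteq E$ inherited directly.

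There is no genuine obstacle here, as the corollary is a direct specialization; the only care needed is in the arithmetic of the binomial coefficient and the $k$-th power of $C$. All substantive content lies in Theorem~\ref{main0}, whose proof is the subject of Section~2.
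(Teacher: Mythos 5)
Your proposal is correct and is exactly the paper's own derivation: Corollary~\ref{main2} is obtained by substituting $D=1$ into Theorem~\ref{main0}, so that $m=\binom{d+1}{d}-1=d$ and $C^k=2^{20kd\log(d+1)}t^{d}$, giving the stated bound. The arithmetic in your computation checks out.
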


In the other direction, Karasev, Kyn\v cl, Pat\'ak, Pat\'akov\'a, and Tancer \cite{kyncl} constructed a $(d+1)$-partite $(d+1)$-uniform semi-algebraic hypergraph $H = (P_1 \cup \cdots \cup P_{d+1},E)$ in $\RR^d$ with complexity $(d + 1,1)$ and $|E| \geq \varepsilon\Pi_{i=1}^{d+1}|P_i|$, where $\varepsilon = 2^{-cd}$ for a fixed constant $c > 0$, and $H$ has the following property.  For any subsets $P_1' \subset P_1,\ldots, P'_{d + 1}\subset P_{d + 1}$ such that $P_{1}'\times\ldots\times P_{d+1}'\subseteq E$, we have $|P'_i| < 2^{-c'd\log d}|P_i|$, with a suitable constant $c' > 0$.  Note that in this case, Corollary \ref{main2} implies the existence of such subsets $P'_1,\ldots, P'_{d +1}$ with $|P'_i| \geq 2^{-c''d^2\log d}|P_i|$ for some $c'' > 0$, so there is still a gap between the lower and upper bounds.

\medskip

\noindent{\bf Polynomial regularity lemma for semi-algebraic hypergraphs.} Szemer\'edi's regularity lemma is one of the most powerful tools in modern combinatorics. In its simplest version~\cite{szemeredi} it gives a rough structural characterization of all graphs. A partition is called {\em equitable} if any two parts differ in size by at most one. According to the lemma, for every $\varepsilon>0$ there is $K=K(\varepsilon)$ such that every graph has an equitable vertex partition into at most $K$ parts such that all but at most an $\varepsilon$ fraction of the pairs of parts behave ``regularly".\footnote{For a pair $(P_i,P_j)$ of vertex subsets, $e(P_i,P_j)$ denotes the number of edges in the graph running between $P_i$ and $P_j$. The density $d(P_i,P_j)$ is defined as $\frac{e(P_i,P_j)}{|P_i||P_j|}$. The pair $(P_i,P_j)$ is called $\varepsilon$-regular if for all $P_i' \subset P_i$ and $P_j' \subset P_j$ with $|P_i'| \geq \varepsilon |P_i|$ and $|P_j'| \geq \varepsilon |P_j|$, we have $|d(P'_i,P'_j)-d(P_i,P_j)| \leq \varepsilon$.} The dependence of $K$ on $1/\varepsilon$ is notoriously bad. It follows from the proof that $K(\varepsilon)$ may be taken to be of an exponential tower of $2$-s of height $\varepsilon^{-O(1)}$. Gowers \cite{Go97} used a probabilistic construction to show that such an enormous bound is indeed necessary. Consult \cite{CoFo}, \cite{MoSh}, \cite{FoLo} for other proofs that improve on various aspects of the result. Szemer\'edi's regularity lemma was extended to $k$-uniform hypergraphs by Gowers \cite{Go1,Go2} and by Nagle \emph{et al}. \cite{NRSS}. The bounds on the number of parts go up in the Ackermann hierarchy, as $k$ increases. This is quite unfortunate, because in property testing and in other algorithmic applications of the regularity lemma this parameter has a negative impact on the efficiency.

Alon~{\em et al.}~\cite{noga} (for $k=2$) and Fox~{\em et al.}~\cite{gromov} (for $k>2$) established an ``almost perfect" regularity lemma for $k$-uniform semi-algebraic hypergraphs $H=(P,E)$. According to this, $P$ has an equitable partition such that all but at most an $\varepsilon$-fraction of the $k$-tuples of parts $(P_{i_1},\ldots,P_{i_k})$ behave not only regularly, but {\em homogeneously} in the sense that either $P_{i_1}\times\ldots\times P_{i_k}\subseteq E$ or $P_{i_1}\times\ldots\times P_{i_k}\cap E=\emptyset$. The proof is essentially qualitative: it gives a very poor estimate for the number of parts in such a partition.

In Section \ref{structure}, we deduce a much better quantitative form of this result, showing that the number of parts can be taken to be polynomial in $1/\varepsilon$.

\begin{theorem}
\label{reg1}
For any positive integers $k, d, t, D$ there exists a constant $c=c(k,d,t,D)>0$ with the following property. Let $0<\varepsilon < 1/2$ and let $H=(P,E)$ be a $k$-uniform semi-algebraic hypergraph in $\mathbb{R}^d$ with complexity $(t,D)$. Then $P$ has an equitable partition $P = P_1\cup \cdots \cup P_K$  into $K\leq (1/\varepsilon)^c$ parts such that all but an $\varepsilon$-fraction of the $k$-tuples of parts are homogeneous.
\end{theorem}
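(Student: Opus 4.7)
\medskip\noindent\textbf{Proof plan.}
The plan is to deduce Theorem \ref{reg1} from the density theorem (Theorem \ref{main0}) via an iterative energy-increment argument, with the polynomial-sized witnesses produced by Theorem \ref{main0} translating into polynomial (rather than tower-type) bounds on the number of parts. I would first reduce to the $k$-partite setting by fixing an enumeration of $P$, splitting it into $k$ arbitrary classes $V_1,\ldots,V_k$, and regularizing each ordered $k$-tuple of classes separately; the constant number of slices and orderings is absorbed into $\varepsilon$. It then suffices to prove: every $k$-partite semi-algebraic hypergraph $(P_1,\ldots,P_k)$ of complexity $(t,D)$ admits equitable partitions $P_j = A_{j,1}\cup\cdots\cup A_{j,M}$ with $M\leq \varepsilon^{-C}$ such that all but an $\varepsilon$-fraction of the product tuples are homogeneous.

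For the $k$-partite statement I would use the standard mean-square-density potential
$$q(\mathcal{Q}) = \sum_{(A_1,\ldots,A_k)} \frac{|A_1|\cdots|A_k|}{|P_1|\cdots|P_k|}\, d(A_1,\ldots,A_k)^2,$$
summed over ordered tuples with $A_j$ a part of the current partition $\mathcal{Q}_j$ of $P_j$; clearly $0\leq q\leq 1$. Starting from the trivial partition, I iterate: if the partition is not yet $\varepsilon$-regular, then bad tuples (density $\rho\in[\varepsilon,1-\varepsilon]$) carry aggregate weight at least $\varepsilon$. For such a tuple $(A_1,\ldots,A_k)$, replacing $E$ by its complement if $\rho<1/2$ (which preserves semi-algebraic complexity up to constants), I apply Theorem \ref{main0} to extract $B_j\subseteq A_j$ with $|B_j|\geq \varepsilon^{c(d,t,D)}|A_j|$ and $B_1\times\cdots\times B_k\subseteq E$, and refine each $A_j$ into $B_j$ and $A_j\setminus B_j$. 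A standard convexity computation for $x\mapsto x^2$ then shows that $q$ increases by at least $w\cdot \varepsilon^{O(k)}$, where $w$ is the weight of the refined tuple.

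The main obstacle is that $w$ can shrink as the partition gets finer, so refining one tuple at a time only reproduces the tower-type bound of the classical proof. To exploit the polynomial relative size of the witnesses guaranteed by Theorem \ref{main0}, I would instead perform a \emph{batched} refinement: in a single round, record a witness $(B_1,\ldots,B_k)$ for every bad tuple, and then simultaneously refine each $P_j$ by the common refinement of the corresponding $B_j$'s collected that round. Because every $B_j$ is cut out by level sets of the defining polynomials $f_1,\ldots,f_t$ evaluated against fixed reference points, the Milnor--Warren sign-pattern bound limits the number of new cells in each $P_j$ to a polynomial in the number of witnesses. Thus one round multiplies $M$ by a polynomial factor while increasing $q$ by an additive $\Omega(\mathrm{poly}(\varepsilon))$ \emph{independent of $M$}, since the aggregate bad weight handled per round is at least $\varepsilon$. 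After $\mathrm{poly}(1/\varepsilon)$ rounds the partition is $\varepsilon$-regular, and the final part count is polynomial in $1/\varepsilon$.

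Finally, I would apply the standard equalization step, splitting any unequal parts into equal-size blocks; this absorbs only a constant factor into the fraction of exceptional tuples and a polynomial factor into the part count, yielding the claimed equitable partition with $K \leq (1/\varepsilon)^c$ parts. The crux of the argument is arranging refinements so that the per-round energy increment is polynomial in $\varepsilon$ and independent of the current partition size; the key leverage is precisely that Theorem \ref{main0} supplies witnesses of polynomial relative density, a feature unavailable in the general (non-semi-algebraic) setting.
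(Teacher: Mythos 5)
There is a genuine gap in the accounting of the number of parts. Your scheme needs $T=\mathrm{poly}(1/\varepsilon)$ refinement rounds (since the energy gain per round is only $\varepsilon^{O(k)}$ and $q\le 1$), and each round multiplies the part count by a factor of at least $2$ --- in fact, since the batched round introduces one witness per bad tuple, the common refinement of a partition with $M$ parts per side can have up to $\mathrm{poly}(M^k)$ cells, so the growth per round is a polynomial in the \emph{current} part count, not in $1/\varepsilon$. Either way, the final part count is at least $2^{T}=2^{\mathrm{poly}(1/\varepsilon)}$, which is exponential in $1/\varepsilon$; the concluding sentence ``the final part count is polynomial in $1/\varepsilon$'' does not follow from anything you have set up. No amount of care in the per-round bookkeeping rescues a mean-square-density increment of size $\mathrm{poly}(\varepsilon)$: the number of rounds is then forced to be polynomial in $1/\varepsilon$, and any multiplicative growth per round destroys the polynomial bound.

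The paper avoids this by dispensing with the energy increment entirely. It applies the density theorem with the \emph{constant} parameter $1/2$ (every non-homogeneous box is at least half full or half empty), which yields a homogeneous sub-box $X_1'\times\cdots\times X_k'$ of relative measure at least $\delta^k$ where $\delta=\delta(k,d,t,D)$ is independent of $\varepsilon$. It then tracks the measure of bad $k$-tuples in a partition of $P^k$ into boxes directly: each round splits every bad box into $2^k$ sub-boxes and converts at least a $\delta^k$-fraction of its measure into homogeneous boxes, so the bad measure decays geometrically and only $r=O_{\delta}(\log(1/\varepsilon))$ rounds are needed, giving $2^{kr}=\mathrm{poly}(1/\varepsilon)$ boxes. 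Only at the very end is the box partition converted into a vertex partition, using the fact that every box side is an intersection of $\mathrm{poly}(1/\varepsilon)$ semi-algebraic sets of bounded complexity together with the Milnor--Thom sign-pattern bound (your intended use of Milnor--Warren appears in the paper here, not inside the iteration). If you want to repair your argument, the essential change is to extract witnesses of \emph{constant} relative density (parameter $1/2$, not $\varepsilon$) so that the quantity you are decreasing --- the bad measure itself --- shrinks by a constant factor per round, reducing the round count to logarithmic.
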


The aim of this paper is to prove Theorems \ref{main0} and \ref{reg1}, and to apply them to extremal and algorithmic questions in combinatorial geometry.  In the following two subsections, we outline these applications. See \cite{conlon}, for other favorable Ramsey-type properties of semi-algebraic sets.

\medskip

\noindent \textbf{Geometric applications.} In Section~\ref{app}, we prove three geometric applications of Corollary \ref{main2}.

\medskip

\noindent \emph{1. Same-type lemma.}  Let $P = (p_1,\ldots,p_n)$ be an $n$-element point sequence in $\mathbb{R}^d$ in {\em general position}, i.e., assume that no $d+1$ points lie in a common hyperplane. For $i_1 < i_2 < \cdots < i_{d+1}$, the \emph{orientation} of the $(d+1)$-tuple $(p_{i_1},p_{i_2},\ldots,p_{i_{d+1}}) \subset P$ is defined as the sign of the determinant of the unique linear mapping $M$ that sends the $d$ vectors $p_{i_2}-p_{i_1},p_{i_3}-p_{i_1},\ldots,p_{i_{d+1}} - p_{i_1}$, to the standard basis $e_1,e_2,\ldots,e_{d}$. Denoting the coordinates of $p_i$ by $x_{i,1},\ldots,x_{i,d}$, the orientation of $(p_{i_1},p_{i_2},\ldots,p_{i_{d+1}})$ is
 $$\chi = \sgn\det\left(\begin{array}{cccc}
                                                           1 & 1 & \cdots & 1 \\
                                                           x_{1,1} & x_{2,1} & \cdots & x_{d+1,1} \\
                                                           \vdots & \vdots & \vdots & \vdots \\
                                                           x_{1,d} & x_{2,d} & \cdots & x_{d+1,d}
                                                         \end{array}\right),$$

 \noindent where $\sgn(r)$ denotes the sign of $r$ in $\RR$.  The \emph{order-type} of $P= (p_1,p_2,\ldots,p_n)$ is the mapping $\chi:{P \choose d + 1}\rightarrow \{+1,-1\}$ (positive orientation, negative orientation), assigning each $(d+1)$-tuple of $P$ its orientation. Therefore, two $n$-element point sequences $P$ and $Q$ have the same order-type if they are ``combinatorially equivalent."  See \cite{matousek} and \cite{gp} for more background on order-types.

Let $(P_1,\ldots,P_k)$ be a $k$-tuple of finite sets in $\mathbb{R}^d$.  A \emph{transversal} of $(P_1,\ldots,P_k)$ is a $k$-tuple $(p_1,\ldots,p_k)$ such that $p_i \in P_i$ for all $i$.  We say that $(P_1,\ldots,P_k)$ has \emph{same-type transversals} if all of its transversals have the same order-type.  B\'ar\'any and Valtr \cite{barany} showed that for $d,k > 1$, there exists a $c = c(d,k)$ such that the following holds.  Let $P_1,\ldots,P_k$ be finite sets in $\mathbb{R}^d$ such that $P_1\cup\cdots \cup P_k$ is in general position.  Then there are subsets $P'_1\subset P_1,\ldots,P'_k\subset P_k$ such that the $k$-tuple $(P'_1,\ldots,P'_k)$ has same-type transversals and $|P'_i| \geq c(d,k)|P_i|$.  Their proof shows that $c(d,k) = 2^{-k^{O(d)}}$.  We make the following improvement.

\begin{theorem}\label{same}
For $k > d$, let $P_1,\ldots,P_k$ be finite sets in $\mathbb{R}^d$ such that $P_1\cup\cdots \cup P_k$ is in general position.  Then there are subsets $P'_1\subset P_1,\ldots,P'_k\subset P_k$ such that the $k$-tuple $(P'_1,\ldots,P'_k)$ has same-type transversals and

$$|P_i'| \geq 2^{-O(d^3k \log k)}|P_i|,$$

\noindent for all $i$.

\end{theorem}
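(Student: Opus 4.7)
The plan is to reduce Theorem \ref{same} to a single application of Corollary \ref{main2}, combined with a Milnor--Thom--type bound on the number of order-types of $k$-tuples in general position in $\mathbb{R}^d$. The key observation is that the order-type of an ordered $k$-tuple is completely determined by the $\binom{k}{d+1}$ orientations of its $(d+1)$-subsets, each of which is the sign of a determinantal polynomial of total degree $d+1$ in the $dk$ coordinates. Applying Warren's theorem (equivalently, the Goodman--Pollack bound) to these $\binom{k}{d+1}$ polynomials in $dk$ variables yields an upper bound of $2^{O(d^2 k \log k)}$ on the number of possible sign vectors, hence on the number of order-types achievable by $k$-tuples in general position in $\mathbb{R}^d$.

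Next, I pigeonhole over the $\prod_{i=1}^k |P_i|$ transversals of $(P_1,\ldots,P_k)$: some common order-type $\tau$ is realized by at least an $\varepsilon$-fraction of them, with $\varepsilon \geq 2^{-O(d^2 k \log k)}$. Define a $k$-partite hypergraph $H = (P_1 \cup \cdots \cup P_k, E)$ by declaring $(p_1,\ldots,p_k) \in E$ iff the transversal has order-type $\tau$. Membership in $E$ is the conjunction of $\binom{k}{d+1}$ prescribed sign conditions on the determinantal polynomials $f_I$; since each $f_I$ is linear in every individual point $p_i$ (with the other points held fixed), $H$ is semi-algebraic of complexity $(t,1)$ with $t = \binom{k}{d+1}$, and its edge density is at least $\varepsilon$.

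Applying Corollary \ref{main2} to $H$ produces subsets $P_i' \subseteq P_i$ with $P_1' \times \cdots \times P_k' \subseteq E$, so every transversal in this product has order-type $\tau$, establishing same-type transversals; moreover,
$$|P_i'| \;\geq\; \frac{\varepsilon^{d+1}}{2^{20kd\log(d+1)}\, t^d}\, |P_i|.$$
Tracking the exponents, $\varepsilon^{d+1} = 2^{-O(d^3 k \log k)}$ dominates the factors $2^{20kd\log(d+1)} = 2^{O(dk \log d)}$ and $t^d \leq k^{d(d+1)} = 2^{O(d^2 \log k)}$, yielding $|P_i'| \geq 2^{-O(d^3 k \log k)} |P_i|$ as required.

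The main obstacle is making the Milnor--Thom/Warren count precise so that the constants inside the $O(\cdot)$ work out cleanly; in particular, the extra factor of $d+1$ in the exponent of $\varepsilon$ is precisely what promotes the $d^2$ in the count of order-types to the $d^3$ in the final bound, and care must be taken that no other factor dominates. A minor secondary issue is the boundary regime in which $\binom{k}{d+1}$ is smaller than $dk$ and Warren's theorem requires a mild adjustment, but this only occurs when $k$ is within a constant of $d$ and is handled trivially.
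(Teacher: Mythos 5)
Your proposal is correct and follows essentially the same route as the paper: pigeonhole on order-types (the paper cites the Goodman--Pollack bound $k^{O(d^2k)}$ directly, which matches your Warren-theorem count of $2^{O(d^2k\log k)}$), encode the chosen order-type as a semi-algebraic relation of complexity $\bigl(\binom{k}{d+1},1\bigr)$ via the determinantal polynomials, and apply Corollary \ref{main2}. The exponent bookkeeping, with $\varepsilon^{d+1}$ as the dominant term, is exactly as in the paper.
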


\medskip

\noindent \emph{2. Homogeneous selections from hyperplanes.}  B\'ar\'any and Pach \cite{barany2} proved that for every integer $d\geq 2$, there is a constant $c = c(d) > 0$ with the following property.  Given finite families $L_1,\ldots,L_{d+1}$ of hyperplanes in $\mathbb{R}^d$ in general position,\footnote{No element of $\bigcup_{i=1}^{d+1} L_i$ passes through the origin, any $d$ elements have precisely one point in common, and no $d+1$ of them have a nonempty intersection.} there are subfamilies $L'_i\subset L_i$ with $|L'_i| \geq c(d)|L_i|$, $1 \leq i \leq d+1$, and a point $q \in \mathbb{R}^d$ such that for every $(h_1,\ldots,h_{d+1}) \in L'_1\times \cdots \times L'_{d+1}$, point $q$ lies in the unique bounded simplex $\Delta(h_1,\ldots,h_{d+1})$ enclosed by $\bigcup_{i = 1}^{d+1} h_i$.  The proof gives that one can take $c(d) = 2^{-(d+1)2^d}$, because they showed that $c(d) = c(d,d+2)$ will meet the requirements, where $c(d,k)$ denotes the constant defined above for same-type transversals. Thus, Theorem \ref{same} immediately implies the following improvement.

\begin{theorem}
Given finite families $L_1,\ldots,L_{d+1}$ of hyperplanes in $\mathbb{R}^d$ in general position, there are subfamilies $L'_i\subset L_i$, $1 \leq i \leq d+1$, with

$$|L'_i| \geq 2^{-O(d^4\log (d+1))}|L_i|,$$

\noindent and a point $q \in \mathbb{R}^d$ such that for every $(h_1,\ldots,h_{d+1}) \in L'_1\times \cdots \times L'_{d+1}$, $q$ lies in the unique bounded simplex $\Delta(h_1,\ldots,h_{d+1})$ enclosed by $\bigcup_{i=1}^{d+1} h_i$.

\end{theorem}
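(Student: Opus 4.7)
The plan is an immediate application of Theorem \ref{same}, following the reduction of B\'ar\'any and Pach \cite{barany2} from the homogeneous hyperplane selection problem to the same-type lemma. Their key structural observation, recalled in the paragraph preceding the statement, is that the constant $c(d)$ in the hyperplane selection theorem can be taken equal to $c(d,d+2)$, the constant in the same-type lemma for $d+2$ finite point sets in general position in $\mathbb{R}^d$. Concretely, one dualizes each hyperplane family $L_i$ to a point set in $\mathbb{R}^d$, and augments with a $(d+2)$-th ``witness'' set whose elements represent candidate common points $q$. The event that $q$ lies in the unique bounded simplex $\Delta(h_1,\ldots,h_{d+1})$ can be encoded as a conjunction of sign conditions on determinants, equivalently as a condition on the orientations (i.e.\ the order-type) of the corresponding $(d+2)$-tuples of dualized points. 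Consequently, any $(d+2)$-tuple of subsets with same-type transversals automatically yields subfamilies $L_i' \subset L_i$ for $1\le i\le d+1$ and a common point $q \in \mathbb{R}^d$ contained in every bounded simplex $\Delta(h_1,\ldots,h_{d+1})$ with $h_i \in L_i'$.

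Given this reduction, I would simply apply Theorem \ref{same} with $k = d+2$ to the $d+2$ dualized point sets. This produces subsets of relative size at least
$$2^{-O(d^3 k \log k)} \;=\; 2^{-O(d^3(d+2)\log(d+2))} \;=\; 2^{-O(d^4\log(d+1))},$$
which matches the claimed bound. The translation back through the duality preserves the sizes of the families, so the same bound applies to the subfamilies $L_i'$.

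Since the reduction is already carried out in \cite{barany2} and was the basis for their original estimate $c(d)=2^{-(d+1)2^d}$, there is no new conceptual content in this step: the improvement comes entirely from substituting the sharper same-type lemma for the original exponential bound. The main obstacle to this improvement therefore does not lie in the corollary itself, but in proving Theorem \ref{same}, which in turn rests on Corollary \ref{main2} for semi-algebraic hypergraphs of complexity $(t,1)$.
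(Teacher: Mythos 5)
Your proposal is correct and is essentially identical to the paper's treatment: the paper likewise derives this theorem immediately from Theorem \ref{same} with $k=d+2$, citing the B\'ar\'any--Pach reduction showing that $c(d)=c(d,d+2)$ suffices, which yields the bound $2^{-O(d^3(d+2)\log(d+2))}=2^{-O(d^4\log(d+1))}$. No separate argument is given in the paper beyond this substitution, so your route and the paper's coincide.
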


\medskip

\noindent \emph{3. Tverberg-type result for simplices.}  In 1998, Pach \cite{pach2} showed that for all natural numbers $d$, there exists $c' = c'(d)$ with the following property.  Let $P_1,P_2,\ldots,P_{d+1}\subset \mathbb{R}^d$ be disjoint $n$-element point sets with $P_1\cup \cdots \cup P_{d+1}$ in general position.  Then there is a point $q\in \mathbb{R}^d$ and subsets $P'_1\subset P_1,\ldots,P'_{d+1} \subset P_{d+1}$, with $|P'_i| \geq c'(d)|P_i|$, such that all closed simplices with one vertex from each $P'_i$ contains $q$.  The proof shows that $c'(d) = 2^{-2^{2^{O(d)}}}$.  Recently, Karasev et al.~\cite{kyncl} improved this to $c'(d) > 2^{-2^{d^2 + O(d)}}$.  Here we make the following improvement.

\begin{theorem}\label{pach}
Let $P_1,P_2,\ldots,P_{d+1}\subset \mathbb{R}^d$ be disjoint $n$-element point sets with $P_1\cup \cdots \cup P_{d+1}$ in general position.  Then there is a point $q\in \mathbb{R}^d$ and subsets $P'_1\subset P_1,\ldots,P'_{d+1} \subset P_{d+1}$, with

$$|P'_i| \geq 2^{-O(d^2\log (d+1))}|P_i|,$$

\noindent such that all closed simplices with one vertex from each $P'_i$ contains $q$.
\end{theorem}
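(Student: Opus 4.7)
The plan is to combine the colored first selection lemma with Corollary~\ref{main2}. The colored first selection lemma produces a point $q \in \mathbb{R}^d$ together with at least $\varepsilon\,|P_1|\cdots|P_{d+1}|$ transversals $(p_1,\ldots,p_{d+1}) \in P_1 \times \cdots \times P_{d+1}$ whose closed simplex $\Delta(p_1,\ldots,p_{d+1})$ contains $q$, with density $\varepsilon \geq (d+1)^{-(d+1)}$, so that $\log(1/\varepsilon) = O(d\log d)$. I would fix such a $q$ and form the $(d+1)$-partite $(d+1)$-uniform hypergraph $H = (P_1 \cup \cdots \cup P_{d+1}, E)$ whose edges are precisely those transversals.

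The key step is to verify that $H$ is semi-algebraic with complexity $(d+2, 1)$. Let $M(p_1,\ldots,p_{d+1})$ denote the $(d+1)\times(d+1)$ matrix whose $j$-th column is $(1, p_j^T)^T$, and for $i = 1,\ldots,d+1$ let $M_i$ denote $M$ with its $i$-th column replaced by $(1, q^T)^T$. Set $f_0 = \det M$ and $f_i = \det M_i$. Then $q \in \Delta(p_1,\ldots,p_{d+1})$ is equivalent to $\sgn(f_i) = \sgn(f_0)$ for every $i \in \{1,\ldots,d+1\}$ (nonnegativity of all barycentric coordinates $\lambda_i = f_i/f_0$), and in general position this is a Boolean combination of the sign conditions $\{f_j \geq 0\}_{j=0}^{d+1}$. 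Since each point $p_j$ appears in exactly one column of each of these matrices, Laplace expansion along that column shows that every $f_i$ is a linear polynomial in the coordinates of $p_j$ when the remaining points are held fixed. Thus $H$ has complexity $(d+2, 1)$.

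Applying Corollary~\ref{main2} to $H$ with $k = d+1$, $t = d+2$, $D = 1$, and the $\varepsilon$ above yields $P'_i \subset P_i$ with $P'_1 \times \cdots \times P'_{d+1} \subseteq E$ and
$$|P'_i| \geq \frac{\varepsilon^{d+1}}{2^{20(d+1)d\log(d+1)}(d+2)^d}\,|P_i|.$$
Since $\varepsilon^{d+1} \geq 2^{-O(d^2 \log(d+1))}$, $20(d+1)d\log(d+1) = O(d^2 \log(d+1))$, and $(d+2)^d = 2^{O(d \log(d+1))}$, this simplifies to $|P'_i| \geq 2^{-O(d^2 \log(d+1))}|P_i|$, and by construction every closed simplex with one vertex from each $P'_i$ contains $q$.

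The main potential obstacle is controlling the density $\varepsilon$ supplied by the colored first selection lemma: one needs $\log(1/\varepsilon) = O(d\log d)$ so that the factor $\varepsilon^{d+1}$ in Corollary~\ref{main2} contributes only $2^{-O(d^2 \log(d+1))}$, matching the target exponent. The classical bound $\varepsilon \geq (d+1)^{-(d+1)}$ does exactly this, and everything else is a straightforward translation of closed-simplex containment into a polynomial-complexity relation followed by an application of the polynomial density theorem.
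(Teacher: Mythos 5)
Your proposal is correct and follows essentially the same route as the paper: a colored point-selection lemma (the paper cites Karasev's bound $\frac{1}{(d+1)!}n^{d+1}$, which matches your $\log(1/\varepsilon)=O(d\log d)$ requirement) provides the point $q$ and the density, simplex containment is encoded as a semi-algebraic relation with $O(d)$ linear-in-each-block determinantal polynomials (the paper uses $2(d+1)$ same-side conditions rather than your $d+2$ barycentric-coordinate signs, an immaterial difference), and Corollary~\ref{main2} finishes the argument with the same bookkeeping.
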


\medskip

\noindent{\bf Algorithmic applications: Property testing.}  The goal of property testing is to quickly distinguish between objects that satisfy a certain property and objects that are far from satisfying that property. This is an active area of computer science which was initiated by Rubinfeld and Sudan \cite{RuSu}. Subsequently, Goldreich, Goldwasser, and Ron \cite{GGR} started the investigation of property testers for combinatorial objects.

Graph property testing, in particular, has attracted a great deal of attention. A {\em property} $\mathcal{P}$ is a family of graphs closed under isomorphism. A graph $G$ with $n$ vertices is {\em $\epsilon$-far} from satisfying $\mathcal{P}$ if one must change the adjacency relation of at least an $\epsilon$ fraction of all pairs of vertices in order to turn $G$ into a graph satisfying $\mathcal{P}$.

Let $\mathcal{F}$ be a family of graphs. An {\it $\epsilon$-tester} for $\mathcal{P}$ with respect to $\mathcal{F}$ is a randomized algorithm, which, given $n$ and the ability to check whether there is an edge between a given pair of vertices, distinguishes with probability at least $2/3$ between the cases $G$ satisfies $\mathcal{P}$ and $G$ is $\epsilon$-far from satisfying $\mathcal{P}$, for every $G \in \mathcal{F}$. Such an $\epsilon$-tester is {\em one-sided} if, whenever $G \in \mathcal{F}$ satisfies $\mathcal{P}$, the $\epsilon$-tester outputs this with probability 1. A property $\mathcal{P}$ is {\it strongly testable} with respect to $\mathcal{F}$ if for every fixed $\epsilon > 0$, there exists a one-sided $\epsilon$-tester for $\mathcal{P}$ with respect to $\mathcal{F}$, whose query complexity is bounded only by a function of $\epsilon$, which is independent of the size of the input graph. The {\it vertex query complexity} of an algorithm is the number of vertices that are randomly sampled.

Property $\mathcal{P}$ is {\it easily testable} with respect to $\mathcal{F}$ if it is strongly testable with a one-sided $\epsilon$-tester whose query complexity is polynomial in $\epsilon^{-1}$, and otherwise $\mathcal{P}$ is {\it hard} with respect to $\mathcal{F}$. In classical complexity theory, an algorithm whose running time is polynomial in the input size is considered fast, and otherwise slow. This provides a nice analogue of polynomial-time algorithms for property testing. The above definitions extend to $k$-uniform hypergraphs, with pairs replaced by $k$-tuples.

A very general result of Alon and Shapira \cite{AlSh} in graph property testing states that every hereditary family $\mathcal{P}$ of graphs is strongly testable. Unfortunately, the bounds on the query complexity that this proof gives are quite enormous. They are based on the strong regularity lemma, which gives wowzer-type bounds.\footnote{Define the \emph{tower} function $T(1) = 2$ and $T(i + 1) = 2^{T(i)}$. Then the \emph{wowzer} function is defined as $W(1) = 2$ and $W(i + 1) = T(W(i))$. } Even the recently improved bound of Conlon and Fox \cite{CoFo,CoFo2} gives only a tower-type bound.\footnote{In addition to the dependence on the approximation parameter $\epsilon$, there is also a dependence on the property being tested that can make it arbitrarily hard to test \cite{AlSh}. However, such properties appear to be pathological and the standard properties that are studied should have only a weak dependence on the property being tested.}  The result of Alon and Shapira was extended to hypergraphs by R\"odl and Schacht \cite{RoSc}; see also the work of Austin and Tao \cite{AT}. These give even worse bounds, of Ackermann-type bounds.  It is known that many properties are not easily testable (see \cite{Al,AlFo,AlSh1}).

In Section~\ref{testing}, we give an application of the polynomial semi-algebraic regularity lemma, Theorem \ref{reg1}, to show how to easily test ``typical'' hereditary properties with respect to semi-algebraic graphs and hypergraphs of constant complexity. The exact statement shows that the query complexity is for every property, polynomial in a natural function of that property, which one typically expects to be polynomial. This provides another example showing that semi-algebraic graphs and hypergraphs are more nicely behaved than general ones. Using the above terminology, a special case of our results is the following.

\begin{corollary}
Let $H$ be a $k$-uniform hypergraph. Then $H$-freeness is easily testable within the family of semi-algebraic hypergraphs of bounded complexity.
\end{corollary}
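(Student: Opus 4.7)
The plan is to use the standard \emph{sample-and-check} tester: pick a uniformly random set $S$ of $s = \mathrm{poly}(1/\varepsilon)$ vertices of $G$, query all $\binom{s}{k}$ potential edges among them, and output ``$H$-free'' if and only if $G[S]$ contains no copy of $H$. One-sidedness is automatic, since if $G$ is $H$-free then so is every induced sub-hypergraph. What remains to verify is that if $G$ is $\varepsilon$-far from $H$-free within the semi-algebraic family, then with probability at least $2/3$ the sample $S$ contains a copy of $H$.

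To lower-bound the density of copies of $H$ in $G$, I would first invoke Theorem \ref{reg1} with parameter $\varepsilon' = \varepsilon/(2\binom{h}{k}+2)$, where $h = |V(H)|$, to obtain an equitable partition $V(G) = P_1\cup\cdots\cup P_K$ into $K\leq (1/\varepsilon)^{c_0}$ parts such that all but an $\varepsilon'$-fraction of the $k$-tuples of parts are homogeneous. Then I would delete from $G$ every edge lying in a non-homogeneous $k$-tuple of parts, together with every edge whose $k$-tuple of parts repeats a part; a direct accounting shows the total number of deleted edges is at most $\varepsilon\binom{n}{k}$. Because $G$ is $\varepsilon$-far from $H$-free, the resulting hypergraph $G'$ is still not $H$-free, and so contains a copy of $H$.

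Fix such a copy, and let $P_{j_1},\ldots,P_{j_h}$ be the parts containing its vertices (possibly with repetitions among the $j_a$). By construction every edge of the copy lies in a \emph{positive} homogeneous $k$-tuple of $k$ distinct parts, and so every transversal $(v'_1,\ldots,v'_h)\in P_{j_1}\times\cdots\times P_{j_h}$ with distinct coordinates spans another copy of $H$ in the original $G$. Using $|P_j|\geq n/(2K)$, the number of $h$-subsets of $V(G)$ spanning a copy of $H$ is at least $c_H(n/K)^h$ for some constant $c_H = c_H(H)>0$, so a uniformly random $h$-subset of $V(G)$ spans a copy of $H$ with probability at least $p \geq c'_H/K^h = \varepsilon^{O(1)}$. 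Note that no counting lemma is needed here, because homogeneity in Theorem \ref{reg1} means density \emph{exactly} $0$ or $1$ in each surviving block.

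To convert this density bound into a high-probability event, I would split $S$ into $\lfloor s/h\rfloor$ disjoint groups of size $h$: for $n$ much larger than $s$, each group is essentially a uniformly random $h$-subset and thus spans a copy of $H$ with probability at least $p/2$, while disjointness makes the groups approximately independent. Taking $s = \Theta(h\log 3/p) = \varepsilon^{-O(1)}$ drives the probability that some group spans a copy of $H$ above $2/3$, yielding a tester with vertex query complexity polynomial in $1/\varepsilon$. The main conceptual step is the reduction to a density lower bound via Theorem \ref{reg1}; the usual principal obstacle in such arguments---building a counting lemma---is bypassed because the regularity lemma here is ``almost perfect,'' and the probabilistic finale is routine.
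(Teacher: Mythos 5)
Your proposal is correct and is essentially the paper's own argument: the corollary is obtained there as the special case $\mathcal{H}=\{H\}$ of Theorem \ref{monotonetest}, whose proof likewise applies Theorem \ref{reg1}, deletes the edges lying in non-complete tuples of parts, locates a surviving copy of $H$ (phrased as a homomorphism from $H$ to the reduced hypergraph $R$ on the parts), and shows the sample hits each relevant part often enough. Your ``every transversal with distinct coordinates spans a copy'' is exactly that blow-up step, and your disjoint-groups finale is a routine substitute for the paper's hypergeometric tail estimate; as you note, no counting lemma is needed because homogeneous means density exactly $0$ or $1$. One small point to patch: Theorem \ref{reg1} only upper-bounds the number of parts, so $K$ could in principle be too small for your claim that the edges meeting some part twice form at most an $\varepsilon/2$-fraction (with $K=1$ you would delete every edge, and the cleaned hypergraph could become $H$-free even though $G$ is $\varepsilon$-far). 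You need $K\gtrsim k^2/\varepsilon$, which is harmless to arrange by refining the equitable partition (refinement preserves homogeneity and only improves the fraction of bad tuples), or by observing that the proof of Theorem \ref{reg1} already outputs $K=4k\varepsilon^{-1}K'$ parts. The paper sidesteps this issue differently, by allowing loops in $R$ and homomorphisms that collapse several vertices of $H$ into one part, rather than discarding tuples with repeated parts.
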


For background and precise results, see Section~\ref{testing}.

\medskip

\noindent {\bf Organization.} The rest of this paper is organized as follows. In the next section, we prove Theorem \ref{main0}, giving a quantitative density theorem for $k$-uniform hypergraphs. In Section \ref{app}, we prove several geometric applications of this result. In Section \ref{structure}, we prove the quantitative regularity lemma for semi-algebraic hypergraphs. In Section \ref{testing}, we establish three results about property testing within semi-algebraic hypergraphs showing that it can be efficiently tested whether a semi-algebraic hypergraph of bounded complexity has a given hereditary property.

We systemically omit floor and ceiling signs whenever they are not crucial for the sake of clarity of presentation. We also do not make any serious attempt to optimize absolute constants in our statements and proofs.

\section{Proof of Theorem \ref{main0}}\label{semi}

 A set $A\subset \mathbb{R}^d$ is \emph{semi-algebraic} if there are polynomials $f_1,f_2,\ldots,f_t \in \mathbb{R}[x_1,\ldots,x_d]$ and a Boolean function
$\Phi$ such that

$$A =  \left\{ \textbf{x} \in \mathbb{R}^d: \Phi(f_1(\textbf{x}) \geq 0,\ldots,f_t(\textbf{x}) \geq 0)=1\right\}.$$

\noindent We say that a semi-algebraic set in $d$-space has \emph{description complexity at most $\kappa$} if the number of inequalities is at most $\kappa$, and each polynomial $f_i$ has degree at most $\kappa$.

Let us first sketch the proof of Theorem \ref{main0}.  Let $H  = (P_1\cup \cdots \cup P_k, E)$ be a $k$-partite semi-algebraic hypergraph in $\RR^d$ with complexity $(t,D)$, such that $|E| \geq \varepsilon\prod_{i = 1}^k|P_i|$.  We can assume that $D = 1$, otherwise the proof can be completed by a simple argument using the Veronese map (described below).  Set $Q = P_2\times \cdots \times P_k$, and consider the bipartite semi-algebraic graph $G = (P_1\cup Q,E)$.  Then the neighborhood of each vertex $q \in Q$ corresponds to a semi-algebraic set $q^{\ast} \subset \RR^d$ with bounded complexity, such that $pq \in E$ if and only if $p \in q^{\ast}$. We apply the well known \emph{cutting lemma} (Lemma \ref{cut} below) with parameter $ r = r(t,\varepsilon)$, to decompose $\RR^d$ into at most $O(r^d)$ relatively open simplices such that each simplex is crossed by at most $|Q|/r$ semi-algebraic sets $q^{\ast}$.  A counting argument shows that one can find a cell $\Delta$ in our decomposition such that for $P'_1 = \Delta\cap P$, we have $|P'_1| \geq \delta |P|$ where $\delta = \delta(d,k,t,\varepsilon) > 0$.  Moreover, there are at least $\gamma|Q|$ semi-algebraic sets $q^{\ast}$ that contain $\Delta$,  where $\gamma = \gamma(d,k,t,\varepsilon) > 0$.  Let $E'\subset P_2\times\cdots \times P_k$ be the set of $(k-1)$-tuples that correspond to the semi-algebraic sets $q^{\ast}\supset \Delta$.  By showing that the $(k-1)$-partite semi-algebraic hypergraph $H' = (P_2,\ldots, P_k,E')$ has bounded complexity, where $|E'| \geq \gamma \prod_{i = 2}^k|P_i|$, we can apply induction to $H'$ to find the remaining subsets $P'_2,\ldots, P'_k$.   We now flesh out the details of the proof.

Let $H = (P,E)$ be a semi-algebraic $k$-uniform hypergraph in $d$-space with complexity $(t,D)$, where $P = \{p_1,\ldots,p_n\}$.  Then there exists a semi-algebraic set

$$E^{\ast} = \left\{ (\textbf{x}_1,\ldots,\textbf{x}_k) \in \mathbb{R}^{dk}: \Phi(f_1(\textbf{x}_1,\ldots,\textbf{x}_k) \geq 0,\ldots,f_t(\textbf{x}_1,\ldots,\textbf{x}_k) \geq 0)=1\right\}$$

\noindent such that

$$(p_{i_1},\ldots,p_{i_k}) \in E^{\ast}\subset \mathbb{R}^{dk}\hspace{.4cm}  \Leftrightarrow\hspace{.4cm} (p_{i_1},\ldots,p_{i_k})  \in E.$$

\noindent  Fix $k-1$ points $q_1,\ldots,q_{k-1} \in \mathbb{R}^d$ and consider the $d$-variate polynomial $h_i(\textbf{x}) = f_i(q_1,\ldots,q_{k-1},\textbf{x})$, $1\leq i \leq t$.  We use a simple but powerful trick known as \emph{Veronese mapping} (\emph{linearization}), that transforms $h_i(\textbf{x})$ into a linear equation.  For $m = {d+  D\choose d} - 1$, we define $\phi:\mathbb{R}^d \rightarrow \mathbb{R}^m$ to be the (Veronese) map given by

$$\phi(x_1,\ldots,x_d) = (x_1^{\alpha_1}\cdots x_d^{\alpha_d})_{1 \leq \alpha_1 + \cdots + \alpha_d \leq D} \in \mathbb{R}^m.$$

\noindent Then $\phi$ maps each surface $h_i(\textbf{x}) = 0$ in $\mathbb{R}^d$ to a hyperplane $h_i^{\ast}$ in $\mathbb{R}^m$.  Note that $\phi$ is injective. For example, the $d$-variate polynomial $h_i(x_1,\ldots,x_d) = c_0 + \sum_i a_i x_1^{\alpha_1}\cdots x_d^{\alpha_d}$ would correspond to the linear equation $h_i^{\ast}(y_1,\ldots,y_m) = c_0 + \sum\limits_{i = 1}^m a_iy_i$.  Given a point set $P = \{p_1,\ldots,p_n\} \subset \mathbb{R}^d$, we write

$$\phi(P) = \{\phi(p_1),\ldots,\phi(p_n)\} \subset \mathbb{R}^m.$$

\noindent Clearly we have the following.

\begin{observation}\label{cor}
For $p \in \mathbb{R}^d$, we have $h_i(p)=h_i^{\ast}(\phi(p))$ and hence
$$\sgn(h_i(p)) = \sgn(h_i^{\ast}(\phi(p))).$$
\end{observation}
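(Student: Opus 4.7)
The statement is essentially a tautology built into the definition of the Veronese (or linearization) map $\phi$, so the plan is simply to verify the identity $h_i(p) = h_i^{\ast}(\phi(p))$ by expanding both sides and read off the sign equality as a corollary.

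First I would write $h_i$ in its monomial expansion. Since $h_i$ is a $d$-variate polynomial of degree at most $D$, it can be written as
$$h_i(x_1,\ldots,x_d) \;=\; c_0 \;+\; \sum_{1 \le \alpha_1+\cdots+\alpha_d \le D} c_{\alpha_1,\ldots,\alpha_d}\, x_1^{\alpha_1}\cdots x_d^{\alpha_d},$$
where the sum ranges over precisely the $m = \binom{d+D}{d}-1$ nonconstant monomial multi-indices that index the coordinates of $\phi$.

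Next I would fix once and for all an enumeration $\alpha^{(1)},\ldots,\alpha^{(m)}$ of these multi-indices, so that the $j$-th coordinate of $\phi(p)$ equals $p_1^{\alpha^{(j)}_1}\cdots p_d^{\alpha^{(j)}_d}$. Then the linear form in the statement is
$$h_i^{\ast}(y_1,\ldots,y_m) \;=\; c_0 \;+\; \sum_{j=1}^{m} c_{\alpha^{(j)}}\, y_j,$$
i.e.\ one keeps the constant term of $h_i$ and transplants its monomial coefficients into the corresponding slots of an affine-linear function in $m$ variables. Substituting $y_j = \phi(p)_j$ reproduces the monomial expansion of $h_i(p)$ term by term, so $h_i(p) = h_i^{\ast}(\phi(p))$. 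Applying $\sgn$ to both sides of this equality of real numbers yields $\sgn(h_i(p)) = \sgn(h_i^{\ast}(\phi(p)))$.

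I do not expect any real obstacle; the only care required is purely notational, namely fixing a consistent ordering on multi-indices so that the coordinates of $\phi$ and the coefficients of $h_i^{\ast}$ align. The whole content of the observation is that a polynomial identity of degree at most $D$ in $d$ variables becomes an affine-linear identity in the Veronese coordinates, which is exactly what the construction of $\phi$ and $h_i^{\ast}$ was designed to encode.
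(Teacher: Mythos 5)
Your proof is correct and is exactly the argument the paper intends: the observation is stated without proof (prefaced only by ``Clearly we have the following''), since it is immediate from the definition of the Veronese map $\phi$ and of $h_i^{\ast}$, and your term-by-term monomial expansion just makes that definitional identity explicit. No issues.
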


For $k\leq d$, a $k$-\emph{simplex} in $\RR^d$ is the convex hull of an affinely independent $(k+1)$-element point set.  We use the word \emph{simplex} when no assumption is made about its dimension.  Given a relatively open simplex $\Delta \subset \mathbb{R}^m$, we say that a hyperplane $h^{\ast} \subset \mathbb{R}^m$  \emph{crosses} $\Delta$ if it intersects $\Delta$ but does not contain it.  Let us recall an old lemma due to Chazelle (see also \cite{clark,chazelle}).

\begin{lemma}[\cite{Ch93}]\label{cut}
For $m\geq 1$, let $L$ be a set of $n$ hyperplanes in $\mathbb{R}^m$ and let $B\subset \RR^m$ be a bounded axis-parallel box.  Then for any integer $r$, $1 < r \leq n$, there is a subdivision of $B$ into at most $2^{10m\log (m+1)} r^m$ relatively open simplices $\Delta_i$, such that each $\Delta_i$ is crossed by at most $n/r$ hyperplanes of $L$.  Moreover, there is a deterministic algorithm for computing such a subdivision in $O(r^{m-1}n)$ time.
\end{lemma}

The main tool used in the proof of Theorem \ref{main0} is the following result on bipartite semi-algebraic graphs ($k = 2$) with point sets in different dimensions.  A very similar result was obtained by Alon et al. (see Section 6 in \cite{noga}).  Let $G = (P,Q,E)$ be a bipartite semi-algebraic graph, where $P \subset \mathbb{R}^{d_1}$, $Q \subset \mathbb{R}^{d_2}$, and $E\subset P\times Q$ has complexity $(t,D)$.  Hence, there are polynomials $f_1,f_2,\ldots,f_t$ and a Boolean formula $\Phi$ such that the semi-algebraic set

$$E^{\ast} = \{(\textbf{x}_1,\textbf{x}_2) \in \mathbb{R}^{d_1 + d_2}: \Phi(f_1(\textbf{x}_1,\textbf{x}_2) \geq 0,\ldots,f_t(\textbf{x}_1,\textbf{x}_2) \geq 0) = 1\},$$

\noindent satisfies

$$(p,q) \in E \hspace{.5cm}\Leftrightarrow\hspace{.5cm} (p,q) \in E^{\ast}\subset \mathbb{R}^{d_1 + d_2}.$$

\noindent For any point $p \in P$, the Veronese map $\phi$ maps the surface $f_i(p,\textbf{x}) = 0$ in $\mathbb{R}^{d_2}$ to a hyperplane $f^{\ast}_i(p,\textbf{y}) = 0$ in $\mathbb{R}^m$, for $1\leq i \leq t$.

\begin{lemma}\label{bialg}
Let $G = (P,Q,E)$ be as above with $|E| \geq \varepsilon |P||Q|$.  Then there are subsets $P'\subset P$ and $Q'\subset Q$ such that

$$|P'| \geq \frac{\varepsilon}{8}|P| \hspace{.5cm}\textnormal{and}\hspace{.5cm}|Q'| \geq  \frac{\varepsilon^{m + 1}}{t^{m} 2^{14m\log (m+1)}}|Q|,$$

\noindent where $m = {d_2 + D \choose d_2} - 1$, and $P'\times Q' \subset E$.  Moreover, there is a relatively open simplex $\Delta \subset \mathbb{R}^m$ such that $\phi(Q') = \phi(Q)\cap \Delta$, and for all $p \in P'$ and $1\leq i \leq t$, the hyperplane $f^{\ast}_i(p,\textbf{y}) = 0$ in $\mathbb{R}^m$ does not cross $\Delta$.

\end{lemma}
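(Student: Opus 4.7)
The plan is to reduce the problem to the ``linear'' case via the Veronese mapping $\phi$ and then apply the Chazelle--Friedman cutting lemma (Lemma~\ref{cut}) in $\RR^m$, followed by a double pigeonhole argument on the resulting cells. For each $p\in P$, the $t$ polynomial surfaces $f_i(p,\mathbf{x})=0$ in $\RR^{d_2}$ lift via $\phi$ to $t$ hyperplanes $f_i^{\ast}(p,\mathbf{y})=0$ in $\RR^m$. Collecting these over all $p\in P$, I obtain a family $L$ of at most $t|P|$ hyperplanes in $\RR^m$. The key structural property I will exploit is this: if $\Delta$ is a relatively open generalized simplex and none of the $t$ hyperplanes associated with a particular $p$ crosses $\Delta$, then the Boolean formula $\Phi(\sgn f_1(p,q),\ldots,\sgn f_t(p,q))$ is constant as $q$ ranges over $\{q\in Q:\phi(q)\in\Delta\}$, since the signs of each $f_i^{\ast}(p,\phi(q))$ are fixed on $\Delta$ (by Observation~\ref{cor}).

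Next I will apply Lemma~\ref{cut} to $L$ with parameter $r=2t/\varepsilon$, producing a subdivision of $\RR^m$ into at most $N\le 2^{10m\log(m+1)}(2t/\varepsilon)^m$ relatively open generalized simplices $\Delta_1,\ldots,\Delta_N$, each crossed by at most $t|P|/r=\varepsilon|P|/2$ hyperplanes of $L$. For each $j$, let $Q_j=\{q\in Q:\phi(q)\in\Delta_j\}$ (these partition $Q$ since $\phi$ is injective and the simplices partition $\RR^m$), and let $P_j=\{p\in P:\text{none of }p\text{'s }t\text{ hyperplanes crosses }\Delta_j\}$. Since distinct $p$'s contribute disjoint hyperplanes to $L$, $|P\setminus P_j|$ is at most the number of hyperplanes of $L$ crossing $\Delta_j$, so $|P\setminus P_j|\le\varepsilon|P|/2$. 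For each $p\in P_j$ the pair $(p,q)$ is either in $E$ for all $q\in Q_j$ or for none; write $P_j^{+}$ for the former set.

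Now I count edges. For every $j$,
\[
e(P,Q_j)\le |P_j^{+}|\cdot|Q_j|+|P\setminus P_j|\cdot|Q_j|,
\]
so summing and using $\sum_j|Q_j|=|Q|$ gives $|E|\le\sum_j|P_j^{+}||Q_j|+\tfrac{\varepsilon}{2}|P||Q|$, whence $\sum_j|P_j^{+}||Q_j|\ge\tfrac{\varepsilon}{2}|P||Q|$. To turn this into simultaneous lower bounds, I separate indices into $J=\{j:|P_j^{+}|\ge\tfrac{\varepsilon}{8}|P|\}$ and its complement. The complement contributes at most $\tfrac{\varepsilon}{8}|P|\cdot|Q|$, so $\sum_{j\in J}|P_j^{+}||Q_j|\ge\tfrac{3\varepsilon}{8}|P||Q|$, which forces $\sum_{j\in J}|Q_j|\ge\tfrac{3\varepsilon}{8}|Q|$. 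By averaging over the at most $N$ indices in $J$, some $j^{\ast}\in J$ satisfies $|Q_{j^{\ast}}|\ge\tfrac{3\varepsilon}{8N}|Q|$. Setting $P'=P_{j^{\ast}}^{+}$, $Q'=Q_{j^{\ast}}$, and $\Delta=\Delta_{j^{\ast}}$ will give $P'\times Q'\subseteq E$ together with the claimed bounds, once I verify that $\tfrac{3\varepsilon}{8N}\ge\varepsilon^{m+1}/(t^m 2^{14m\log(m+1)})$, which is a routine arithmetic check using $N\le 2^{10m\log(m+1)}(2t/\varepsilon)^m$.

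The main obstacle I anticipate is not conceptual but bookkeeping: I need to choose the cutting parameter $r$ and the threshold (here $\varepsilon/8$) so that the edge-count losses absorb into the ``wrong'' term and the remaining pigeonhole yields exactly the stated exponents $\varepsilon^{m+1}$ and $2^{14m\log(m+1)}$. The choices $r=2t/\varepsilon$ and threshold $\varepsilon/8$ appear to be tuned so that the arithmetic in the last step goes through with the $10m\log(m+1)$ from Lemma~\ref{cut} plus the extra $O(m)$ from $2^m$ and $r^m$ fitting under $14m\log(m+1)$; I will verify this inequality explicitly at the end.
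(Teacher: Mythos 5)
Your proposal is correct and follows essentially the same route as the paper's proof: lift via the Veronese map, apply the Chazelle--Friedman cutting lemma with $r=\Theta(t/\varepsilon)$, and pigeonhole over the resulting cells, using the fact that a point $p$ whose $t$ lifted hyperplanes avoid a cell is homogeneous to that cell. The only difference is bookkeeping: the paper first discards edges incident to small cells $Q_i$ and then finds a large $P_j$ by contradiction with $r=8t/\varepsilon$, whereas you keep all cells and run a two-stage averaging with $r=2t/\varepsilon$; both yield the stated bounds.
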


\begin{proof}

Let $S$ be the set of (at most) $t|P|$ surfaces in $\mathbb{R}^{d_2}$ defined by $f_i(p,\textbf{x}) =0$, for $1 \leq i \leq t$ and for each $p \in P$.  Set $m = {d_2 + D\choose d_2} -1$ and let $\phi:\mathbb{R}^{d_2}\rightarrow \mathbb{R}^m$ denote the Veronese map described above.  Then each surface $f_i(p,\textbf{x}) = 0$ in $\mathbb{R}^{d_2}$ will correspond to a hyperplane $f_i^{\ast}(p,\textbf{y}) = 0$ in $\mathbb{R}^m$.  Then let $L$ be the set of $t|P|$ hyperplanes in $\mathbb{R}^m$ that corresponds to the surfaces in $S$.  We set $B\subset \RR^m$ to be a sufficiently large (bounded) axis-parallel box whose interior contains $\phi(Q)$.

Let $r > 1$ be an integer that will be determined later.  By Lemma \ref{cut}, there is a subdivision of $B$ into at most $2^{10m\log(m + 1)}r^{m}$ relatively open simplices $\Delta_i$, such that each $\Delta_i$ is crossed by at most $t|P|/r$ hyperplanes from $L$.  We define $Q_i \subset Q$ such that $\phi(Q_i) = \phi(Q)\cap \Delta_i$.  If

$$|Q_i| < \frac{\varepsilon}{2\cdot 2^{10m\log (m+1)} r^{m}}|Q|,$$

\noindent then we discard all edges in $E$ that are incident to vertices in $Q_i$.  By Lemma \ref{cut}, we have removed at most

$$2^{10m\log (m+1)}r^{m}\frac{\varepsilon}{2\cdot 2^{10m\log (m+1)} r^{m}}|P||Q| = \frac{\varepsilon}{2}|P||Q|$$

\noindent edges.  Let $E_1$ be the set of remaining edges. Hence, $|E_1| \geq \varepsilon|P||Q|-(\varepsilon/2)|P||Q|=(\varepsilon/2)|P||Q|$.   For $p \in P$, if all $t$ hyperplanes $f^{\ast}_1(p,\textbf{y}) = 0,\ldots,f^{\ast}_t(p,\textbf{y}) = 0$ in $\mathbb{R}^m$ do not cross $\Delta_i$, then by Observation \ref{cor}, the sign pattern of $f_j(p,q)$ does not change over all $q \in Q_i$.  Hence, in this case, we have either $p\times Q_i\subset E_1$ or $p\times Q_i\subset \overline{E}_1$.

We define $P_i \subset P$ to be the set of points in $P$ such that $p\in P_i$ if and only if $p$ is adjacent to all points in $Q_i$ (with respect to $E_1$) and all $t$ hyperplanes $f_j(p,\textbf{x}) = 0$, $1\leq j \leq t$, in $\mathbb{R}^{m}$ do not cross the simplex $\Delta_i$.  For each fixed $i$, by Lemma \ref{cut}, we know that at most $t|P|/r$ points in $P$ gives rise to a hyperplane that crosses $\Delta_i$.

Hence there exists a $P_j$ such that $|P_j| \geq (\varepsilon/8)|P|$, since otherwise

$$|E_1| \leq \sum\limits_{i}\left(\frac{\varepsilon}{8}|P| + \frac{t|P|}{r}\right)|Q_i| = \left(\frac{\varepsilon}{8} + \frac{t}{r}\right)|P||Q|.$$

\noindent For $r = 8t/\varepsilon$, this implies $|E_1| < (\varepsilon/4)|P||Q|$, which is a contradiction.  Therefore, we have subsets $P_j \subset P$ and $Q_j \subset Q$ such that $|P_j| \geq (\varepsilon/8)|P|$ and $P_j\times Q_j \subset E_1$.  By construction of $E_1$, for $r = 8t/\varepsilon$, we have

$$|Q_j| \geq \frac{\varepsilon}{2\cdot 2^{10m\log (m+1)} r^{m}}|Q| = \frac{\varepsilon^{m + 1}}{2\cdot t^{m}2^{10m \log (m+1)}2^{3m}}|Q|\geq  \frac{\varepsilon^{m + 1}}{t^{m}2^{14m \log (m+1)}}|Q|.$$

\noindent Moreover, we obtain a simplex $\Delta_j\subset \RR^m$ such that $\phi(Q_j) = \phi(Q)\cap \Delta_j$.

\end{proof}

We are now ready to prove Theorem \ref{main0}.

\medskip

\noindent \emph{Proof of Theorem \ref{main0}.}  Let $H = (P,E)$ be a $k$-partite semi-algebraic hypergraph in $d$-space with complexity $(t,D)$, such that $P = P_1\cup \cdots \cup P_k$ and $|E| \geq \varepsilon|P_1|\cdots |P_k|$.  For $m = {d + D\choose d} - 1$, we will show that there are subsets $P'_1 \subset P_1, P'_2 \subset P_2,\ldots, P'_k \subset P_k$, such that for $1\leq i \leq k$,

$$|P'_i| \geq \frac{\varepsilon^{m + 1}}{8^{k(m + 1)} t^{m } (2m  + 3)^{km} 2^{14m \log (m+1)}}|P_i|,$$

\noindent and $P'_1\times \cdots \times P'_k \subset E$.  This would suffice to prove the statement.  We proceed by induction on $k$.  The base case $k = 2$ follows from Lemma \ref{bialg}.  Now assume that the statement holds for $k' < k$.

Since $E$ is semi-algebraic with complexity $(t,D)$, there are polynomials $f_1,\ldots,f_t$ and a Boolean formula $\Phi$ such that the semi-algebraic set

$$E^{\ast} = \{(\textbf{x}_1,\ldots,\textbf{x}_k) \in \mathbb{R}^{dk}: \Phi(f_1(\textbf{x}_1,\ldots,\textbf{x}_k) \geq 0,\ldots,f_t(\textbf{x}_1,\ldots,\textbf{x}_k) \geq 0) = 1\},$$

\noindent has the property that

$$(p_1,\ldots,p_k) \in E \hspace{.5cm}\Leftrightarrow\hspace{.5cm} (p_1,\ldots,p_k) \in E^{\ast}\subset \mathbb{R}^{dk}.$$

Set $Q = P_1\times \cdots \times P_{k-1}$, and with slight abuse of notation, let $G = (Q,P_k,E)$ be the bipartite semi-algebraic graph with relation $E$.  Just as above, we let $\phi:\mathbb{R}^{d}\rightarrow \mathbb{R}^m$ denote the Veronese map.  Then for $(p_1,\ldots,p_{k-1}) \in Q$, each surface $f_i(p_1,\ldots,p_{k-1},\textbf{x}) = 0$ in $\mathbb{R}^d$ will correspond to a hyperplane $f_i^{\ast}(p_1,\ldots,p_{k-1},\textbf{y}) = 0$ in $\mathbb{R}^m$, $1\leq i \leq t$.  By Lemma \ref{bialg}, there are subsets $Q'\subset Q$ and $P'_{k} \subset P_{k}$ such that

$$|Q'| \geq \frac{\varepsilon}{8}|Q| \hspace{.5cm}\textnormal{and}\hspace{.5cm} |P'_k| \geq \frac{\varepsilon^{m  +1}}{t^{m}2^{14m\log (m+1)}}|P_k|,$$

\noindent and $Q'\times P'_k \subset E$.  Moreover there is a relatively open simplex $\Delta\subset \mathbb{R}^m$ such that $\phi(P'_k) = \Delta\cap \phi(P_k) $, and for any $(k-1)$-tuple $(p_1,\ldots,p_{k-1}) \in Q'$, all $t$ hyperplanes $f^{\ast}_i(p_1,\ldots,p_{k-1},\textbf{y}) = 0$, $1\leq i \leq t$, do not cross $\Delta$.  Let $p^{\ast}_1,\ldots,p^{\ast}_{\ell} \in \mathbb{R}^m$ be the vertices of $\Delta$, where $\ell \leq m + 1$.

We define $H_1 = (P\setminus P_k,E_1)$ to be the $(k-1)$-partite semi-algebraic hypergraph in $d$-space on the point set $P_1\cup  \cdots \cup P_{k-1}$ with relation $E_1\subset P_1\times \cdots \times P_{k-1}$, where $(p_1,\ldots,p_{k-1}) \in E_1$ if and only if $(p_1,\ldots,p_{k-1},q) \in E$ for all $q \in P'_k$, and all $t$ hyperplanes $f^{\ast}_i(p_1,\ldots,p_{k-1},\textbf{y}) = 0$ in $\mathbb{R}^m$, $1\leq i\leq t$, do not cross $\Delta$.

Next we need to check that the complexity of $E_1$ is not too high.  In order to do this, we define relations $E_2,E_3 \subset P_1\times \cdots \times P_{k-1}$ as follows.  Fix a point $q \in P'_k$, and let

 $$E_2 = \{(p_1,\ldots,p_{k-1}) \in P_1\times \cdots \times P_{k-1}:(p_1,\ldots,p_{k-1},q) \in E\}.$$

 \noindent Since the relation $E$ has complexity $(t,D)$, the relation $E_2$ is semi-algebraic with complexity $(t,D)$.

 We define the relation $E_3\subset P_1\times \cdots \times P_{k-1}$, where $(p_1,\ldots,p_{k-1}) \in E_3$ if and only if all $t$ hyperplanes $f^{\ast}_i(p_1,\ldots,p_{k-1},\textbf{y}) = 0$, $1 \leq i \leq t$, do not cross $\Delta$ in $\mathbb{R}^m$.  Notice that a hyperplane does not cross $\Delta$ in $\mathbb{R}^m$ if and only if the vertices $p^{\ast}_1,\ldots,p^{\ast}_{\ell}$ of $\Delta$ all lie in a closed half-space defined by the hyperplane (recall that $\Delta$ is relatively open).   Therefore, for each vertex $p^{\ast}_j$ of the simplex $\Delta$, we define $t$ polynomials, each in $dk-d$ variables.

 $$g_{i,j}(\textbf{x}_1,\ldots,\textbf{x}_{k-1}) = f^{\ast}_i(\textbf{x}_1,\ldots,\textbf{x}_{k-1},p^{\ast}_j).$$

Then there is a Boolean formula $\Phi_3$ such that the semi-algebraic set $E_3^{\ast} \subset \RR^{d(k-1)}$, where $(\textbf{x}_1,\ldots,\textbf{x}_{k-1}) \in E_3^{\ast}$ if and only if

 $$\Phi_3\left(\begin{array}{ll}
                  g_{1,1}(\textbf{x}_1,\ldots,\textbf{x}_{k-1}) \geq 0, & -g_{1,1}(\textbf{x}_1,\ldots,\textbf{x}_{k-1}) \geq 0,  \ldots \\
                 g_{t,\ell}(\textbf{x}_1,\ldots,\textbf{x}_{k-1}) \geq 0, & -g_{t,\ell}(\textbf{x}_1,\ldots,\textbf{x}_{k-1}) \geq 0
               \end{array}\right) = 1,$$

 \noindent has the property that $(p_1,\ldots,p_{k-1}) \in E_3$ if and only if $(p_1,\ldots,p_{k-1}) \in E_3^{\ast} \subset \mathbb{R}^{d(k-1)}$, and $E_3$ has complexity $(2t\ell,D)$.  Notice that if $(p_1,\ldots,p_{k-1}) \in E_3$, then by Observation \ref{cor}, the sign pattern of $f_i(p_1,\ldots,p_{k-1}, q)$ does not change over all $q\in P'_k$ since $\phi(P'_k) \subset \Delta$.  Therefore, if we also have $(p_1,\ldots,p_{k-1}) \in E_2$, this implies $(p_1,\ldots,p_{k-1}) \in E_1$.  That is,

 $$(p_1,\ldots,p_{k-1}) \in E_1\hspace{.5cm}\Leftrightarrow \hspace{.5cm} (p_1,\ldots,p_{k-1}) \in E^{\ast}_1 = E_2^{\ast} \cap E_3^{\ast}\subset \mathbb{R}^{d(k-1)},$$

\noindent and $E_1$ has complexity $(t + 2t\ell,D)$.  Since $\ell \leq m + 1$, $E_1$ has complexity $(t (2m +3),D)$.  By construction of $Q'$, we have

$$|E_1| \geq |Q'| \geq \frac{\varepsilon}{8}|P_1|\cdots |P_{k-1}|.$$

\noindent By applying the induction hypothesis on the $(k-1)$-partite semi-algebraic hypergraph $H_1 = (P\setminus P_k, E_1)$, we obtain subsets $P'_i\subset P_i$, $1\leq i \leq k-1$, such that

\begin{eqnarray*}
    |P'_i| & \geq & \frac{(\varepsilon/8)^{m+1}}{8^{(k-1)(m + 1)}(t (2m + 3))^{m}(m+ 2)^{(k - 1)m}2^{14m\log (m+1)}}|P_i| \\\\
     &\geq & \frac{\varepsilon^{m+1}}{8^{k(m + 1)} t^{m} (2m + 3)^{km} 2^{14m\log (m+1)}}|P_i|,
  \end{eqnarray*}

\noindent and $P'_1\times \cdots \times P'_{k-1} \subset E_1$.  By definition of $E_1$ and by construction of $P'_k$, we have $P'_1\times \cdots \times P'_k \subset E$, and this completes the proof. $\hfill\square$

\medskip

Next, we will show that the $P'_i$-s obtained in Theorem \ref{main0} can be captured by a semi-algebraic set with bounded description complexity.  While the following variant of Theorem \ref{main0} gives a weaker lower bound for $|P'_i|$, it does however produce a semi-algebraic set $\Delta_i$ such that $P'_i = P_i\cap \Delta_i$, for $i = 1,2,\ldots, k$.

\begin{theorem}\label{sets}
For $k,d,t,D,\varepsilon > 0$, there exists a constant $\delta=\delta(k,d,t,D,\varepsilon) > 0$ with the following property. Let $H = (P_1\cup\cdots\cup P_k,E)$ be any $k$-partite semi-algebraic hypergraph in $\RR^d$ with complexity $(t,D)$ and $|E| \geq \varepsilon\Pi_{i=1}^k|P_i|$. Then one can choose subsets $P'_i \subset P_i$, $1\leq i \leq k$, such that

$$|P'_i| \geq \delta|P_i|,$$

\noindent and $P'_1\times \ldots \times P'_k \subseteq E$.  Moreover, there are semi-algebraic sets $\Delta_1,\ldots, \Delta_k$ such that $P'_i = P_i\cap \Delta_i$, and each $\Delta_i$ has complexity at most $\kappa = \kappa(k,d,t,D,\varepsilon)$.

\end{theorem}

\proof  Let $\delta = \delta(k,d,t,D,\varepsilon)$ ($\kappa = \kappa(k,d,t,D,\varepsilon$)) be sufficiently small (large) and chosen later.  Each simplex $\Delta^{\ast} \subset \RR^m$ in the proof above, corresponds to a semi-algebraic set $\Delta\subset \RR^d$.  Therefore, the proof above shows that after applying a $(k-2)$-fold application of Lemma \ref{bialg}, we obtain semi-algebraic sets $\Delta_1,\ldots, \Delta_{k-2} \subset \RR^d$, and a bipartite semi-algebraic graph $G = (P_{k-1},P_k, E')$, such that for $P'_i  = P_i\cap \Delta_i$, $1\leq i \leq k-2$, we have $(p_1,\ldots, p_k) \in E(H)$ for every $p_1\in P'_1,\ldots, p_{k-2} \in P'_{k-2},$ and $(p_{k-1},p_k) \in E'(G)$.  Moreover, we have $|P'_i| \geq \delta |P_i|$, each semi-algebraic set $\Delta_i$ has complexity at most $\kappa = \kappa(k,d,t,D,\varepsilon)$, the complexity of the edge relation $E'$ is $(c,c)$, where $c = c(\varepsilon, k,d,t,D)$, and $|E'| \geq \gamma |P_{k-1}||P_k|$ where $\gamma = \gamma(\varepsilon, k,d,t,D)$.  We obtain $P'_{k-1}$, $P'_{k}$, $\Delta_{k-1}$, and $\Delta_k$ by applying Lemma \ref{keybipartite} below to the bipartite graph $G = (P_{k-1},P_k,E')$, the statement follows and this completes the proof of Theorem \ref{sets}. \qed

\begin{lemma}\label{keybipartite}  For $c,d, \gamma > 0$, there is a $\delta = \delta(c,d,\gamma)$ and a $\kappa  = \kappa(c,d,\gamma)$ with the following property.  Let $G = (P,Q,E)$ be any bipartite semi-algebraic graph in $\RR^d$ with complexity at most $(c,c)$ and $|E|\geq \gamma |P||Q|$.  Then one can choose subsets $P'\subset P$ and $Q'\subset Q$ such that $|P'|\geq \delta|P|$, $|Q'|\geq \delta|Q|$, and $P'\times Q' \subset E$. Moreover, there are semi-algebraic sets $\Delta_{1},\Delta_2 \subset \RR^d$ such that $P' = \Delta_{1}\cap P$ and $Q' = \Delta_{2}\cap Q$, and each $\Delta_i$ has complexity at most $\kappa$.
\end{lemma}

\begin{proof}

Without loss of generality, we can assume $c > 4$.  Since $E$ is a semi-algebraic relation with complexity $(c,c)$, there are $c$ polynomials $f_1,\ldots, f_c$ and a Boolean formula $\Phi$ such that the semi-algebraic set

$$E^{\ast} = \{(\textbf{x}_1,\textbf{x}_2); \in \RR^{2d}: \Phi(f_1(\textbf{x}_1,\textbf{x}_2) \geq 0,\ldots, f_c(\textbf{x}_1,\textbf{x}_2) \geq 0) = 1\}$$

\noindent satisfies

$$(p,q) \in E \hspace{.5cm}\Leftrightarrow \hspace{.5cm} (p,q) \in E^{\ast} \subset \RR^{2d}.$$

Let $\Sigma_1$ be the set of at most $c|P|$ surfaces in $\RR^d$ defined by $f_i(p,\textbf{x}) = 0$, for $1\leq i \leq c$ and for each $p\in P$.  Likewise, let $\Sigma_2$ be the set of at most $c|Q|$ surfaces in $\RR^d$ defined by $f_i(\textbf{x},q) = 0$, for $1\leq i \leq c$ and for each $(q \in Q)$.  Then, for $m = m(c)$, let $\phi:\RR^d \rightarrow \RR^m$ denote the Veronese map such that the surfaces $f_i(p,\textbf{x}) = 0$, $f_i(\textbf{x},q) = 0$ in $\RR^d$ correspond to hyperplanes $f^{\ast}_i(p,\textbf{y}) = 0, f^{\ast}_i(\textbf{y},q) = 0$ in $\RR^m$, for $1\leq i \leq c$.  Let $\Sigma^{\ast}_1$, $\Sigma_2^{\ast}$ be the images of $\Sigma_1$, $\Sigma_2$ via $\phi$ respectively.

Let $B\subset \RR^d$ be a sufficiently large bounded axis-parallel box that contains $P\cup Q$.  Since a simplex in the image $\phi(B)\subset \RR^m$ corresponds to a semi-algebraic set in $B$ via $\phi$, we can apply Lemma~\ref{cut} with parameter $r = c^2/\gamma$ to $\Sigma^{\ast}_1$, and obtain a subdivision of the box $B$ into $s \leq c_1(c^2/\gamma)^m$ semi-algebraic sets $\Delta_1, \ldots,\Delta_s$, such that each $\Delta_i$ is crossed by at most $\gamma|P|/c$ hypersurfaces from $\Sigma_1$.  Moreover, each $\Delta_i$ has complexity at most $c_2 = c_2(c,d)$.   Likewise, given $\Sigma^{\ast}_2$, we apply Lemma \ref{cut} with parameter $r = c^2/\gamma$ to obtain another subdivision of $B$ into $s \leq c_1(c^2/\gamma)^m$ semi-algebraic sets $\Delta'_1,\ldots,\Delta'_s$, such that each $\Delta'_i$ is crossed by at most $\gamma|Q|/c$ hypersurfaces from $\Sigma_2$, and each $\Delta'_i$ has complexity at most $c_2$.

Let $U_{\ell} = Q\cap \Delta_{\ell}$ for each $\ell \leq s$.  We now partition $\Delta_{\ell}$ as follows. For $j \in \{1,\ldots, s\}$, define $\Delta_{\ell,j} \subset \Delta_{\ell}$ by

 $$\Delta_{\ell,j} = \{y \in \Delta_{\ell}: f_i(\textbf{x},y) = 0 \textnormal{ crosses } \Delta'_j \textnormal{ for some $i$}\}.$$

 \begin{observation}
For any $j$ and $\ell$, the semi-algebraic set $\Delta_{\ell,j}$ has complexity at most $c_3 = c_3(c,d)$.

 \end{observation}

 \begin{proof}

Set $\sigma(y) = \{\textbf{x} \in \RR^d: f_i(\textbf{x},y) = 0 \textnormal{ for some $i$, $1\leq i\leq c$} \}$, which is a semi-algebraic set with complexity at most $c_4 = c_4(c,d)$.  Then

$$\Delta_{\ell,j} = \left\{y \in \Delta_{\ell}: \begin{array}{l}
                                            \exists x_1 \in \RR^d \textnormal{ s.t. } x_1\in \sigma(y)\cap \Delta'_{j}, \textnormal{ and } \\
                                             \exists x_2 \in \RR^d \textnormal{ s.t. } x_2\in   \Delta'_{j}\setminus \sigma(y).
                                          \end{array}\right\}.$$

We can apply quantifier elimination (see Theorem 2.74 in \cite{basu}) to make $\Delta_{\ell,j}$ quantifier-free, with description complexity at most $c_3 = c_3(c,d)$.

 \end{proof}

Set $\mathcal{F}_{\ell} = \left\{\Delta_{\ell,j}: 1 \leq j \leq s \right\}$.  We partition the points in $\Delta_{\ell}$ into equivalence classes, where two points $u,v \in \Delta_{\ell}$ are equivalent if and only if $u$ belongs to the same members of $\mathcal{F}_{\ell}$ as $v$ does.  Since $\mathcal{F}_{\ell}$ gives rise to at most $c_3|\mathcal{F}_{\ell}|$ polynomials of degree at most $c_3$, by the Milnor-Thom theorem (see \cite{matousek} Chapter 6), the number of distinct sign patterns of these $c_3|\mathcal{F}_{\ell}|$ polynomials is at most $\left(50c_3(c_3|\mathcal{F}_{\ell}|)\right)^d.$  Hence, there is a constant $c_5 = c_5(c,d)$ such that $\Delta_{\ell}$ (and therefore $U_{\ell}$) is partitioned into at most $c_5s^d$ equivalence classes, each class being a semi-algebraic set of complexity at most $\kappa = \kappa(c,d,\gamma)$.    After repeating this procedure to each $\Delta_{\ell}$ and $U_{\ell}$, we obtain a partition of our point set $Q =  Q_1\cup \cdots \cup Q_K$ with $K = K(c,d,\gamma)$.  Moreover, we obtain semi-algebraic sets $\tilde{\Delta}_1,\ldots, \tilde{\Delta}_K$ such that $Q_i = \tilde{\Delta}_i\cap Q$, and each $\tilde{\Delta}_i$ has complexity at most $\kappa$.  We repeat this entire procedure on $P$ and obtain a partition $P = P_1\cup \cdots \cup P_K$ and semi-algebraic sets $\tilde{\Delta}'_1,\ldots, \tilde{\Delta}'_K$ such that $P_i = \tilde{\Delta}_i'\cap P$, each $\tilde{\Delta}_i'$ having complexity at most $\kappa$.

We say that a pair of parts $(P_i,Q_j)$ is \emph{bad}, if $|P_i| \leq \gamma|P|/(8K)$, or $|Q_j| \leq \gamma|Q|/(8K)$, or if $(P_i,Q_j)$ is not homogeneous.   By deleting all edges between bad pairs $(P_i,Q_j)$ such that either $|P_i| \leq \gamma|P|/(8K)$ or $|Q_j| \leq \gamma|Q|/(8K)$, we have deleted at most $\gamma|P||Q|/4$ edges.  Hence, there are at least $3\gamma|P||Q|/4$ edges remaining in $G$.

Let us now examine the bad pairs $(P_i,Q_j)$ that are not homogeneous.  Consider the part $Q_j$.  Then there is a semi-algebraic set $\Delta_{\ell}$ obtained from the earlier application of Lemma \ref{cut} such that $U_{\ell} = Q\cap \Delta_{\ell}$ and  $Q_j \subset U_{\ell} \subset \Delta_{\ell}$.  Now consider all parts $P_i$ such that $P_i$ contains a vertex $p$ that is not complete or empty to $Q_j$.  Then, by construction of our partition $P = P_1\cup \cdots\cup P_K$, each point $p\in P_i$ gives rise to a surface $\sigma \in \Sigma_1$ that crosses $\Delta_{\ell}$.  By our earlier application of Lemma \ref{cut}, the total number of such points in $P$ is at most $\gamma|P|/c$.  Therefore, by deleting all edges between such bad pairs, we have deleted at most

$$\sum\limits_i |P_i||Q_j| = |Q_j| \sum\limits_i |P_i| \leq \gamma|Q_j| |P|/c$$

\noindent edges, where the sum is over all $i$ such that $P_i$ contains a vertex $p$ that is not complete or empty to $Q_j$.  Summing over all $j$, we have deleted at most

$$\sum\limits_{i,j} |P_{i}||Q_{j}| \leq \gamma|P||Q|/c$$

\noindent edges, where the sum is taken over all pairs $i,j$ such that $P_i$ contains a vertex that is not complete or empty to $Q_j$.  A symmetric argument shows that by deleting all edges between parts $P_i$ and $Q_j$ such that $Q_j$ contains a vertex $q$ that is not complete or empty to $P_i$, we have deleted at most $\gamma|P||Q|/c$ edges.

Therefore, we have deleted in total at most $2\gamma|P||Q|/c + \gamma|P||Q|/4 < \gamma|P||Q|$ edges in our original bipartite graph $G$, which implies that there are still edges remaining.  Hence, for sufficiently small $\delta = \delta(c,d,\gamma)$ (namely $\delta = \gamma/(8K)$), there are parts $P'\subset P$ and $Q'\subset Q$ such that $P'\times Q' \subset E$, and $|P'| \geq \delta |P|$, $|Q'| \geq \delta|Q|$.  Moreover, for sufficiently large $\kappa = \kappa(c,d,\gamma)$, there are semi-algebraic sets $\tilde{\Delta}_i,\tilde{\Delta}'_j\subset \RR^d$, each having complexity at most $\kappa$, such that $P' = P\cap \tilde{\Delta}_i$ and $Q' = Q\cap \tilde{\Delta}_j$.  By renaming these semi-algebraic sets $\Delta_1$ and $\Delta_2$, this completes the proof of Lemma \ref{keybipartite}.\end{proof}

\medskip

By setting $\varepsilon = 1/2$ in Theorem \ref{sets}, we obtain the following Ramsey-type result on semi-algebraic hypergraphs.

\begin{lemma}
\label{ramsey}

Let $H = (P,E)$ be a $k$-partite semi-algebraic hypergraph in $d$-space, where $P = P_1\cup \cdots \cup P_k$, $E\subset P_1\times \cdots \times P_k$, and $E$ has complexity $(t,D)$.  Then there exist a $\delta = \delta(k,d,t,D)$ and subsets $P'_1\subset P_1,\ldots,P'_k\subset P_k$ such that for $1\leq i \leq k$,

$$|P'_i| \geq \delta |P_i|,$$

\noindent and $(P'_1,\ldots,P'_k)$ is homogeneous (i.e., either complete or empty).  Moreover, there are semi-algebraic sets $\Delta_1,\ldots,\Delta_k\subset \mathbb{R}^d$ such that $\Delta_i$ has complexity $\kappa$, where $\kappa = \kappa(k,d,t,D)$ and $P'_i = P_i \cap \Delta_i$ for all $i$.
\end{lemma}

\noindent  We note that a similar result was obtained by Fox et al.~(Theorem 8.1 in \cite{gromov}) and Bukh-Hubard (Theorem 18 in \cite{hubard}).  The proofs in both papers show that one can find $P'_1,\ldots, P'_k$ and $\Delta_1,\ldots, \Delta_{k-1}$ with the properties described in Lemma \ref{ramsey}.   However, their arguments do not show that one can also find the last semi-algebraic set $\Delta_k$.  This additional property in Lemma \ref{ramsey} will be crucial in the proof of Theorem \ref{reg1}.

In the proof of Theorem \ref{sets}, Lemma \ref{cut} is applied $k-2$ times, each time to a family of at most $tn^{k-1}$ hyperplanes with parameter at most $r = r(k,d, t, D, \varepsilon)$, to obtain subsets $P_1',\ldots, P_{k-2}'$, semi-algebraic sets $\Delta_1,\ldots, \Delta_{k-2}\subset \RR^d$, and the bipartite graph $G = (P_{k-1},P_k,E')$.  Since $k,d, t, D, \varepsilon$ are fixed constants, this can be done in $O(n^{k})$ time by Lemma \ref{cut} and the proof of Lemma \ref{bialg}.  Then the proof of Lemma~\ref{keybipartite} shows that we can find remaining subsets $P'_{k-1}$, $P'_{k}$, and semi-algebraic sets $\Delta_{k-1}$, $\Delta_k$, in $O(n^2)$ time.  Hence, we have the following algorithmic result.

\begin{theorem}\label{alg0}
Given fixed constants $k,d, t, D > 0$, there is a $\kappa = \kappa(k, d, t, D)$ and a $\delta = \delta(k, d, t, D)$ such that the following holds.  Let $H = (P_1\cup\ldots\cup P_k,E)$ be any $k$-partite semi-algebraic hypergraph in $\RR^d$ with complexity $(t,D)$ such that $|P_i| = n$ for all $i$.  Then there is a deterministic algorithm that finds semi-algebraic sets $\Delta_1,\ldots, \Delta_k$, each $\Delta_i$ having complexity at most $\kappa$, such that for $P'_i = P_i\cap \Delta_i$, we have $|P'_i| \geq \delta n$ and $(P'_1,\ldots, P'_k)$ is homogeneous.   Moreover, this algorithm runs in $O(n^{k})$ time.

\end{theorem}

\section{Applications of Corollary \ref{main2}}\label{app}  In this section, we apply Corollary 1.2 to establish better bounds for the same-type lemma (Theorem~\ref{same}) and the Tverberg-type result for simplices (Theorem \ref{pach}).

\medskip

\subsection{Same-type lemma}

\emph{Proof of Theorem \ref{same}.}  Let $P_1,\ldots,P_k$ be finite point sets in $\mathbb{R}^d$ such that $P = P_1\cup \cdots \cup P_k$ is in general position.  By a result of Goodman and Pollack (see \cite{pollack} and \cite{gp}), the number of different order-types of $k$-element point sets in $d$ dimensions is at most $k^{O(d^2k)}$.  By the pigeonhole principle, there is an order-type $\pi$ such that at least

$$k^{-O(d^2k)}|P_1|\cdots |P_k|$$

\noindent $k$-tuples $(p_1,\ldots,p_k) \in (P_1,\ldots,P_k)$ have order-type $\pi$.  We define the relation $E\subset P_1\times \cdots \times P_k$, where $(p_1,\ldots,p_k) \in E$ if and only if $(p_1,\ldots,p_k)$ has order-type $\pi$.  Next we need to check that the complexity of $E$ is not too high.

We can check to see if $(p_1,\ldots,p_k)$ has order-type $\pi$ by simply checking the orientation of every $(d+1)$-tuple of $(p_1,\ldots,p_k)$.  More specifically, for $\textbf{x}_i = (x_{i,1},\ldots,x_{i,d})$, we define the $(d^2+d)$-variate polynomial

$$f(\textbf{x}_1,\ldots,\textbf{x}_{d+1}) = \det\left(\begin{array}{cccc}
                                                           1 & 1 & \cdots & 1 \\
                                                           x_{1,1} & x_{2,1} & \cdots & x_{d+1,1} \\
                                                           \vdots & \vdots & \vdots & \vdots \\
                                                           x_{1,d} & x_{2,d} & \cdots & x_{d+1,d}
                                                         \end{array}\right).$$

\noindent Then there exists a Boolean formula $\Phi$, such that the semi-algebraic set

$$E^{\ast} = \{(\textbf{x}_1,\ldots,\textbf{x}_k) \in \mathbb{R}^{dk}: \Phi(  \{f(\textbf{x}_{i_1},\ldots,\textbf{x}_{i_{d+1}}) \geq 0\}_{1 \leq i_1 < \cdots < i_{d+1} \leq k}) = 1\},$$

\noindent which is defined by the ${k\choose d+1}$ polynomials $f(\textbf{x}_{i_1},\ldots,\textbf{x}_{i_{d+1}})$, $1\leq i_1 < \cdots < i_{d+1}\leq k$, has the property that

$$(p_1,\ldots,p_k) \in E \hspace{.5cm}\Leftrightarrow \hspace{.5cm} (p_1,\ldots,p_k) \in E^{\ast}.$$

\noindent  Hence, the complexity of $E$ is $\left({k \choose d+1},1\right)$.  Therefore, we can apply Corollary \ref{main2} to the $k$-partite semi-algebraic hypergraph $H = (P,E)$ with $\varepsilon = k^{-O(d^2k)}$ and $t = {k\choose d+1}$ to obtain subsets $P'_1\subset P_1,\ldots,P'_k\subset P_k$ such that $(P'_1,\ldots,P'_k)$ has same-type transversals (each transversal has order-type $\pi$), and

$$|P'_i| \geq 2^{-O(d^3k\log k)}|P_i|,$$

\noindent for $1 \leq i \leq k$. $\hfill\square$

\subsection{A Tverberg-type result}

The proof of Theorem \ref{pach} requires the following result of Karasev \cite{kar} (see also Theorem 4 in \cite{kyncl}).

\begin{lemma}[\cite{kar}]\label{karasev}
Let $P_1,\ldots, P_{d+1} \subset \RR^d$ be disjoint $n$-element point sets with $P_1\cup \cdots \cup P_{d+1}$ in general position.  Then there is a point $q \in \RR^d$ which is contained in the interior of at least

$$\frac{1}{(d+1)!}n^{d + 1}$$

\noindent rainbow simplices, where a rainbow simplex is a simplex generated by selecting one point from each $P_i$.

\end{lemma}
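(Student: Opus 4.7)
The plan is to prove Karasev's colored First Selection Lemma by passing from the discrete setting to a continuous one, applying an equivariant topological obstruction argument there, and then transferring the bound back to the original point sets.

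First, I would set $\mu_i = \frac{1}{n}\sum_{p \in P_i}\delta_p$, the uniform probability measure supported on $P_i$. The target bound is then equivalent to producing a point $q \in \mathbb{R}^d$ such that
\[
(\mu_1 \otimes \cdots \otimes \mu_{d+1})\bigl(\{(p_1,\ldots,p_{d+1}) : q \in \operatorname{int}\conv(p_1,\ldots,p_{d+1})\}\bigr) \,\geq\, \frac{1}{(d+1)!}.
\]
Convolving each $\mu_i$ with a small Gaussian gives absolutely continuous measures $\mu_i^{(\eta)}$ that weakly approach $\mu_i$ as $\eta \to 0$, so it suffices to prove the inequality for absolutely continuous measures and then pass to the limit.

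The heart of the proof is an equivariant topological argument on the configuration space $(\mathbb{R}^d)^{d+1}$ endowed with its natural coordinate-permuting $S_{d+1}$-action. One constructs a continuous, $S_{d+1}$-equivariant test map that encodes whether a candidate point $q$ lies in the interior of $\conv(p_1,\ldots,p_{d+1})$, and then applies an Euler-class / equivariant-degree computation (Karasev's input) to certify that for any absolutely continuous family $\mu_1,\ldots,\mu_{d+1}$, some $q$ must satisfy the displayed inequality. The specific constant $\frac{1}{(d+1)!} = 1/|S_{d+1}|$ is not coincidental: it arises as the symmetric threshold below which the relevant equivariant obstruction class would have to vanish.

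Finally, I would transfer back to the discrete setting. General position of $P_1 \cup \cdots \cup P_{d+1}$ ensures that the number of rainbow simplices containing a generic $q$ is locally constant on each open cell of the hyperplane arrangement spanned by $d$-element subsets of $\bigcup_i P_i$. Applying the continuous bound to the smoothings $\mu_i^{(\eta)}$ and sending $\eta \to 0$ along a convergent subsequence of the associated optimal points $q_\eta$ yields a limiting $q$ in general position, for which at least $\frac{1}{(d+1)!}n^{d+1}$ rainbow simplices contain $q$ in their interior. The main obstacle is the equivariant degree/Euler-class computation in the middle step: the constant $\frac{1}{(d+1)!}$ is tight, which rules out any elementary averaging, probabilistic, or greedy argument, so one genuinely needs the non-vanishing of an $S_{d+1}$-equivariant characteristic class to extract the correct threshold.
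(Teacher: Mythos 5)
The paper does not prove this lemma at all: it is quoted directly from Karasev \cite{kar} (see also Theorem 4 in \cite{kyncl}), so there is no internal argument to compare yours against. Judged as a standalone proof, your proposal has a genuine gap at exactly the step that carries all the content. The outer layers (replacing $P_i$ by uniform measures, smoothing by convolution, passing to the limit) are routine reductions; the bound $\frac{1}{(d+1)!}$ itself is supposed to come from the ``equivariant test map plus Euler-class/equivariant-degree computation,'' and that step is never specified, let alone carried out. You do not say what the equivariant map is, what its target is, which class is computed, or why its non-vanishing forces the product measure of rainbow simplices covering some point $q$ to be at least $1/|S_{d+1}|$; you explicitly defer this to ``Karasev's input'' and flag it as the main obstacle. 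Since this is precisely the theorem being proved, the proposal amounts to citing the result rather than proving it. (As an aside, Karasev's actual argument is a rather elementary degree/homotopy argument obtained by deforming the measures, not an $S_{d+1}$-equivariant characteristic-class computation in the style of the colored Tverberg machinery, so the black box you invoke is not even clearly the right one; and the heuristic ``the constant is tight, hence no averaging argument can work'' is not a substitute for the computation.)

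A secondary, fixable issue is the final limiting step. From optimal points $q_\eta$ for the smoothed measures you extract a limit $q$, but nothing guarantees that $q$ avoids the hyperplanes spanned by $d$-element subsets of $\bigcup_i P_i$; containment in \emph{closed} simplices survives the limit, while the lemma asks for \emph{interior} containment. One needs an extra perturbation/counting argument (using that every rainbow simplex is full-dimensional by general position) to move $q$ into an open cell without losing the count, or one should work with closed containment throughout and only then convert. Your sentence ``yields a limiting $q$ in general position'' asserts this rather than proving it.
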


\medskip

\noindent \emph{Proof of Theorem \ref{pach}.} We may assume $n= 2^{\Omega(d^2 \log d)}$ as otherwise we can take $p_i \in P_i$, $P_i'=\{p_i\}$, and $q$ to be any point in the simplex with vertices $p_1,\ldots,p_{d+1}$.

 Let $P_1,\ldots,P_{d+1}$ be disjoint $n$-element point sets with $P_1\cup \cdots \cup P_{d+1}$ in general position. If a simplex has one vertex in each $P_i$, then we call it \emph{rainbow}.  Hence the number of rainbow simplices is $N = n^{d+1}$.

By Lemma \ref{karasev}, there is a point $q$ contained in the interior of at least

$$\frac{1}{(d + 1)!}n^{d+1}$$

\noindent rainbow simplices.

We define $H = (P,E)$ to be the $(d+1)$-partite semi-algebraic hypergraph, where $P = P_1\cup \cdots \cup P_{d+1}$, $E\subset P_1\times \cdots \times P_{d+1}$, where $(p_1,\ldots,p_{d+1}) \in E$ if and only if $q \in \conv(p_1\cup \cdots \cup p_{d+1})$.  Next we need to check that the complexity of $E$ is not too high.

To see if $q \in \conv(p_1\cup \cdots \cup p_{d+1})$, we just need to check that the points $q$ and $p_j$ lie on the same side of the hyperplane spanned by the $d$-tuple $(p_1,\ldots,p_{j-1},p_{j+1},\ldots,p_{d+1})$, for each $1\leq j \leq d+1$.  More specifically, for $\textbf{x}_i = (x_{i,1},\ldots,x_{i,d})$, we define the $(d^2 + d)$-variate polynomial

$$f(\textbf{x}_1,\ldots,\textbf{x}_{d+1}) =  \det\left(\begin{array}{cccc}
                                                           1 & 1 & \cdots & 1 \\
                                                           x_{1,1} & x_{2,1} & \cdots & x_{d+1,1} \\
                                                           \vdots & \vdots & \vdots & \vdots \\
                                                           x_{1,d} & x_{2,d} & \cdots & x_{d+1,d}
                                                         \end{array}\right).$$

\noindent Then there exists a Boolean formula $\Phi$ such that the semi-algebraic set

$$E^{\ast} = \left\{(\textbf{x}_1,\ldots,\textbf{x}_{d+1}) \in \mathbb{R}^{d(d+1)}: \Phi\left( \left\{  \begin{array}{c}
                                                                                             f(\textbf{x}_1,\ldots,\textbf{x}_{j-1},\textbf{x}_{j+1},\ldots,\textbf{x}_{d+1},q) \geq 0, \\
                                                                                              f(\textbf{x}_1,\ldots,\textbf{x}_{j-1},\textbf{x}_{j+1},\ldots,\textbf{x}_{d+1},\textbf{x}_j) \geq 0
                                                                                           \end{array}\right\}_{1\leq j \leq d+1}\right) = 1\right\},$$

\noindent satisfies

$$(p_1,..,p_{d+1}) \in E \hspace{.5cm}\Leftrightarrow\hspace{.5cm} (p_1,\ldots,p_{d+1}) \in E^{\ast}.$$

\noindent Hence $E$ has complexity $(2(d+1),1)$, and

$$|E| \geq \frac{1}{(d + 1)!}|P_1|\cdots |P_{d+1}|.$$

\noindent Thus, we can apply Corollary \ref{main2} to $H = (P,E)$ with $\varepsilon = 1/(d+1)!$ and $t = 2(d+1)$, to obtain subsets $P'_1\subset P_1,\ldots,P'_{d+1} \subset P_{d+1}$ such that

$$|P'_i| \geq 2^{-O(d^2 \log(d+1))}|P_i|,$$

\noindent and all closed simplices with one vertex from each $P'_i$ contains $q$. $\hfill\square$

\section{Regularity lemma for semi-algebraic hypergraphs}
\label{structure}

In this section, we prove Theorem \ref{reg1}.  We first prove the following variant of Theorem~\ref{reg1}.

\begin{theorem}
\label{reg}
For any $\varepsilon > 0$, the vertex set of any semi-algebraic $k$-uniform hypergraph $H = (P,E)$ in $d$-space with complexity $(t,D)$, can be partitioned into $K\leq (1/\varepsilon)^{c}$ parts $P = P_1\cup \cdots \cup P_K$, where $c = c(k,d,t,D)$ is a constant, such that

$$\sum\frac{|P_{j_1}|\cdots |P_{j_k}|}{|P|^k} \leq \varepsilon.$$

\noindent Here the sum is taken over all $k$-tuples $(j_1,\ldots,j_k)$ such that $(P_{j_1},\ldots,P_{j_k})$ is not homogeneous.
\end{theorem}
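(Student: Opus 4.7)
The plan is to prove Theorem \ref{reg} by iteratively applying Lemma \ref{ramsey} along a recursion tree of $k$-tuples of sub-parts, while maintaining the final partition as the cell decomposition of an arrangement of bounded-complexity semi-algebraic sets so that the classical Milnor--Thom--Warren arrangement bound controls the number of parts.

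View $H$ as a $k$-partite semi-algebraic hypergraph on $k$ identified copies of $P$. Let $\delta = \delta(k,d,t,D)$ and $\kappa = \kappa(d,t,D)$ be the constants furnished by Lemma \ref{ramsey}. Build a recursion tree rooted at $(P,\ldots,P)$: at each non-homogeneous node $T = (Q_1,\ldots,Q_k)$, apply Lemma \ref{ramsey} to obtain semi-algebraic sets $\Delta_1,\ldots,\Delta_k \subset \mathbb{R}^d$, each of complexity $\leq \kappa$, such that $(Q_1 \cap \Delta_1,\ldots,Q_k \cap \Delta_k)$ is homogeneous and has mass at least $\delta^k \cdot m(T)$. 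Add $\Delta_1,\ldots,\Delta_k$ to a growing global collection $\mathcal{S}$ and spawn up to $2^k-1$ children corresponding to the remaining sub-products of $\{\Delta_j,\Delta_j^c\}_{j=1}^k$ on $T$ (the ``all-inside'' sub-product is the extracted homogeneous piece and becomes a leaf). Terminate at depth $s = \lceil \log(1/\varepsilon)/\delta^k\rceil$. An inductive mass argument shows that the sum of masses of non-homogeneous nodes at level $i$ is at most $(1-\delta^k)^i$, so the bad leaves at depth $s$ carry total mass at most $(1-\delta^k)^s \leq \varepsilon$.

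The recursion tree has at most $\sum_{i=0}^{s-1}(2^k - 1)^i = 2^{O(ks)}$ internal nodes, each contributing $k$ new semi-algebraic sets of complexity $\leq\kappa$ to $\mathcal{S}$, giving $|\mathcal{S}| = 2^{O(ks)} = (1/\varepsilon)^{O(k/\delta^k)}$. The Milnor--Thom--Warren arrangement bound for semi-algebraic sets of bounded complexity in $\mathbb{R}^d$ then gives that the cell decomposition $\mathcal{P}$ of $P$ induced by $\mathcal{S}$ satisfies $|\mathcal{P}| \leq C \cdot |\mathcal{S}|^d = (1/\varepsilon)^{O(kd/\delta^k)} = (1/\varepsilon)^c$ with $c = c(k,d,t,D)$. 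Any $k$-tuple of parts of $\mathcal{P}$ can be traced through the recursion tree by following, at each level, the sign pattern of that branch's $\Delta$-sets on it: if at some level the $k$-tuple lands in an ``all-inside'' child it is homogeneous by heredity; otherwise it lies inside a bad leaf, and such $k$-tuples contribute total weight at most $\varepsilon$.

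The principal obstacle is the clean interplay between the ``local'' Ramsey extractions performed at each recursion node and the ``global'' arrangement that actually produces $\mathcal{P}$: one must verify that further refinement of a previously extracted homogeneous sub-tuple by $\Delta$-sets coming from unrelated branches preserves its homogeneity (which is just heredity of homogeneity under refinement) and that the bad-leaf mass bound transfers cleanly to a bound on the non-homogeneous weight in $\mathcal{P}$. The quantitative core is balancing the geometric decay $1-\delta^k$ of bad mass per level against the geometric growth $2^k$ of the tree, with the depth $s = O(\log(1/\varepsilon)/\delta^k)$ tuned so that the arrangement bound yields the stated polynomial estimate.
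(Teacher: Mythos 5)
Your proposal is correct and follows essentially the same route as the paper's proof: iterate Lemma \ref{ramsey} to depth $O(\log(1/\varepsilon)/\delta^k)$ along a tree of $2^k$-fold refinements, accumulate the bounded-complexity sets $\Delta_i$ into a global family of size $(1/\varepsilon)^{O(1)}$, take the common refinement via the Milnor--Thom/Warren cell bound, and transfer homogeneity from the tree's leaves to the final parts by containment. The only differences are cosmetic (depth $\lceil\log(1/\varepsilon)/\delta^k\rceil$ versus $\log\varepsilon/\log(1-\delta^k)$, and a contrapositive phrasing of the final tracing step).
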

\begin{proof}
 Let $\varepsilon > 0$ and $H = (P,E)$ be an $n$-vertex $k$-uniform semi-algebraic hypergraph with complexity $(t,D)$ in $d$-space.  Recall that $E$ is a symmetric relation. For every integer $r \geq 0$, we will recursively define a partition $\mathcal{P}_r$ on $P^k = P\times \cdots \times P$ into at most $2^{kr}$ parts of the form $X_1\times \cdots \times X_k$, such that at most $(1-\delta^k)^r|P|^k$ $k$-tuples $(p_1,\ldots,p_k)$ lie in a part $X_1\times \cdots \times X_k$ with the property that $(X_1,\ldots,X_k)$ is not homogeneous.  Note that $\delta$ is defined in Lemma \ref{ramsey}.  Also, for each $r\geq 0$, we will inductively define a collection $\mathcal{F}_r$ of semi-algebraic sets, each set with complexity at most $\kappa = \kappa(k,d,t,D)$, such that $|\mathcal{F}_r| \leq \sum_{j=0}^{r} 2^{kj}$, and for each part $X_1\times \cdots \times X_k$ in $\mathcal{P}_r$, there are subcollections $\mathcal{S}_1,\ldots,\mathcal{S}_k \subset \mathcal{F}_r$ such that

    $$X_i = \left(\bigcap\limits_{\Delta \in \mathcal{S}_i}\Delta\right)\cap P.$$

\noindent Here $\kappa = \kappa(k,d,t,D)$ is the same constant as in Lemma \ref{ramsey}.  Given such a partition $\mathcal{P}_r$, a $k$-tuple $(p_1,\ldots,p_k) \in P\times\cdots \times P$ is called \emph{bad} if $(p_1,\ldots,p_k)$ lies in a part $X_1\times \cdots \times X_k$ in $\mathcal{P}_r$, for which $(X_1,\ldots,X_k)$ is not homogeneous.

We start with $\mathcal{P}_0 = \{P\times \cdots \times P\}$ and $\mathcal{F}_0 = \{\mathbb{R}^d\}$, which satisfies the base case $r=0$.  After obtaining $\mathcal{P}_i$ and $\mathcal{F}_i$, we define $\mathcal{P}_{i + 1}$ and $\mathcal{F}_{i+1}$ as follows.  Let $X_1\times\cdots \times X_k$ be a part in the partition $\mathcal{P}_i$.  Then if $(X_1,\ldots,X_k)$ is homogeneous, we put $X_1\times\cdots \times X_k$ in $\mathcal{P}_{i+1}$.  If $(X_1,\ldots,X_k)$ is not homogeneous, then notice that $(X_1,\ldots,X_k)$ gives rise to $|X_1|\cdots|X_k|$ bad $k$-tuples $(p_1,\ldots,p_k)$.  Hence, we apply Lemma \ref{ramsey} on $(X_1,\ldots,X_k)$ to obtain subsets $X_1'\subset X_1$,\ldots,$X_k'\subset X_k$ and semi-algebraic sets $\Delta_1,....,\Delta_k$ with the properties described above.  Then we partition $X_1\times\cdots \times X_k$ into $2^k$ parts $Z_1 \times \cdots \times Z_k$ where $Z_i \in \{X_i',X_i \setminus X_i'\}$ for $1 \leq i \leq k$, and put these parts into $\mathcal{P}_{i + 1}$.  The collection $\mathcal{F}_{i+1}$ will consist of all semi-algebraic sets from $\mathcal{F}_i$, plus all semi-algebraic sets $\Delta_1,\ldots,\Delta_k,\mathbb{R}^d\setminus \Delta_1,\ldots,\mathbb{R}^d\setminus \Delta_k$ that were obtained after applying Lemma \ref{ramsey} to each part $X_1\times \cdots \times X_k$ in $\mathcal{P}_r$ for which $(X_1,\ldots,X_k)$ is not homogeneous.

By the induction hypothesis, the number of bad $k$-tuples $(p_1,\ldots,p_k)$ in $\mathcal{P}_{i + 1}$ is at most

  $$(1 - \delta^k) (1-\delta^k)^i|P|^k =(1-\delta^k)^{i+ 1}|P|^k.$$

\noindent The number of parts in $\mathcal{P}_{i + 1}$ is at most $2^k\cdot 2^{ki} = 2^{k(i + 1)}$, and $|\mathcal{F}_{i+1}| \leq |\mathcal{F}_i| + 2k|\mathcal{P}_i| \leq |\mathcal{F}_i|+2^{k(i+1)} \leq \sum_{j = 0}^{i+1} 2^{kj}$.  By the induction hypothesis, for any part $X_1\times\cdots \times X_k$ in $\mathcal{P}_{i+1}$ such that $X_1\times\cdots \times X_k$ was also in $\mathcal{P}_{i}$, there are subcollections $\mathcal{S}_1,\ldots,\mathcal{S}_k \subset \mathcal{F}_i$ such that

    $$X_i = \left(\bigcap\limits_{\Delta \in \mathcal{S}_i}\Delta\right)\cap P,$$

\noindent for $1\leq i \leq k$.  If $X_1\times \cdots \times X_k$ is not in $\mathcal{P}_i$, then there must be a part $Y_1\times\cdots \times Y_k$ in $\mathcal{P}_i$, such that $X_1\times\cdots \times X_k$ is one of the $2^k$ parts obtained from applying Lemma \ref{ramsey} to $Y_1\times\cdots \times Y_k$.  Hence, $X_i\subset Y_i$ for $1 \leq i \leq k$.  Let $\Delta_1,\ldots,\Delta_k$ be the semi-algebraic sets obtained when applying Lemma \ref{ramsey} to $Y_1\times\cdots \times Y_k$.  By the induction hypothesis, we know that there are subcollections  $\mathcal{S}_1,\ldots,\mathcal{S}_k \subset \mathcal{F}_i$ such that

    $$Y_i = \left(\bigcap\limits_{\Delta \in \mathcal{S}_i}\Delta\right)\cap P,$$

\noindent for $1\leq i \leq k$.  Hence, there are subcollections $\mathcal{S}'_i \subset \mathcal{S}_i\cup \{\Delta_i,\mathbb{R}^d\setminus\Delta_i\}$ such that

   $$X_i = \left(\bigcap\limits_{\Delta \in \mathcal{S}_i'}\Delta\right)\cap P,$$

\noindent for $1\leq i \leq k$.  We have therefore obtained our desired partition $\mathcal{P}_{i+1}$ on $P\times\cdots \times P$, and collection $\mathcal{F}_{i+1}$ of semi-algebraic sets.

At step $r =\frac{\log \varepsilon}{\log(1- \delta^k)}$, there are at most $\varepsilon |P|^k$ bad $k$-tuples $(p_1,\ldots,p_k)$ in partition $\mathcal{P}_r$.  The number of parts of $\mathcal{P}_r$ is at most $(1/\varepsilon)^{c_1}$ and $|\mathcal{F}_r| \leq (1/\varepsilon)^{c_2}$, where $c_1 = c_1(k,d,t,D)$ and $c_2 = c_2(k,d,t,D)$ (recall that $\delta = \delta(k,d,t,D)$).

Finally, we partition the vertex set $P$ into $K$ parts, $P_1,P_2,\ldots,P_K$, such that two vertices are in the same part if and only if every member of $\mathcal{F}_r$ contains both or neither of them.  Since $\mathcal{F}_r$ consists of at most $(1/\varepsilon)^{c_2}$ semi-algebraic sets, and each set has complexity at most $c=c(k,d,t,D)$, we have $K\leq (1/\varepsilon)^{c_3}$ where $c_3 = c_3(k,d,t,D)$ (see Theorem 6.2.1 in \cite{matousek}).  Now we just need to show that

$$\sum|P_{j_1}|\cdots|P_{j_k}| < \varepsilon |P|^k,$$

\noindent where the sum is taken over all $k$-tuples $(j_1,\ldots,j_k)$, $1\leq j_1 < \cdots < j_k \leq K$, such that $(P_{j_1},\ldots,P_{j_k})$ is not homogeneous.  It suffices to show that for a $k$-tuple $(j_1,\ldots,j_k)$, if $(P_{j_1},\ldots,P_{j_k})$ is not homogeneous, then all $k$-tuples $(p_1,\ldots,p_k) \in P_{j_1}\times \cdots \times P_{j_k}$ are bad in the partition $\mathcal{P}_r$.

For the sake of contradiction, suppose that $(P_{j_1},\ldots,P_{j_k})$ is not homogeneous, and that the $k$-tuple $(p_1,\ldots,p_k) \in P_{j_1}\times \cdots \times P_{j_k}$ is not bad. Then there is a part $X_1\times \cdots \times X_k$ in the partition $\mathcal{P}_r$ such that $p_i \in X_i$ for all $i$, and $(X_1,\ldots,X_k)$ is homogeneous.  Hence, there are subcollections $\mathcal{S}_1,\ldots,\mathcal{S}_k \subset \mathcal{F}_r$ such that

   $$X_i = \left(\bigcap\limits_{\Delta \in \mathcal{S}_i}\Delta\right)\cap P,$$

\noindent for $1\leq i \leq k$.  However, by construction of $P_{j_1},\ldots,P_{j_k}$, this implies that $P_{j_i}\subset X_i$ for all $i$, and we have a contradiction.  Therefore, we have obtained our desired partition $P_1,P_2,\ldots,P_K$.
\end{proof}

Although Theorem \ref{reg} does not necessarily give an equitable partition of $P$, Theorem \ref{reg1} now quickly follows.

\medskip

\noindent \emph{Proof of Theorem \ref{reg1}.} Apply Theorem \ref{reg} with approximation parameter $\varepsilon/2$. So there is a partition $\mathcal{Q}:P=Q_1 \cup \cdots \cup Q_{K'}$ into $K' \leq (2/\varepsilon)^c$ parts, where $c=c(k,d,t,D)$, such that $\sum |Q_{i_1}||Q_{i_2}|\cdots|Q_{i_k}| \leq (\varepsilon/2) |P|^k$, where the sum is taken over all $k$-tuples $(i_1,\ldots,i_k)$ such that $(Q_{i_1},\ldots,Q_{i_k})$ is not homogeneous.

Let $K=4k\varepsilon^{-1}K'$. Partition each part $Q_i$ into parts of size $|P|/K$ and possibly one additional part of size less than $|P|/K$. Collect these additional parts and divide them into parts of size $|P|/K$ to obtain an equitable partition $\mathcal{P}:P=P_1 \cup \cdots \cup P_K$ into $K$ parts. The number of vertices of $P$ which are in parts $P_i$ that are not contained in a part of $\mathcal{Q}$ is at most $K'|P|/K$. Hence, the fraction of $k$-tuples $P_{i_1} \times \cdots \times P_{i_k}$ with not all $P_{i_1},\ldots,P_{i_k}$ subsets of parts of $\mathcal{Q}$ is at most $kK'/K=\varepsilon/4$. As $\varepsilon/2+\varepsilon/4<\varepsilon$, we obtain that less than an $\varepsilon$-fraction of the $k$-tuples of parts of $\mathcal{P}$ are not homogeneous, which completes the proof.
$\hfill\square$

\medskip

The proof of Theorem \ref{reg} shows that we can obtain such a partition of $P$ in $O(\varepsilon^{-c}n^{k-1})$ time, where $c = c(k,d,t,D)$.  Indeed, we apply Theorem \ref{alg0} $\varepsilon^{-c}$ times to obtain the family of semi-algebraic sets $\mathcal{F}_r$, where $|\mathcal{F}_r| = O(\varepsilon^{-c})$.  This can be done in  $O(\varepsilon^{-c}n^{k-1})$ time.   We then partition our point set $P$ by checking which sets of $\mathcal{F}_r$ the points of $P$ lie in.  This can be done in $O(\varepsilon^{-c}n)$ time.  Finally, the argument above shows that we can refine our partition to obtain an equitable partition of $P$ satisfying the properties of Theorem \ref{reg1}.  This refinement can be done in $O(n)$ time.   This gives us the following algorithmic result.

\begin{corollary}\label{regalg}
For fixed constants $k,d,t,D > 0$, let $0 < \varepsilon < 1/2$ and $H = (P,E)$ be a $k$-uniform semi-algebraic hypergraph in $\RR^d$ with complexity $(t,D)$.  Then there is a deterministic algorithm that finds a partition of $P$ satisfying the properties in Theorem \ref{reg1} that runs in $O(\varepsilon^{-c}n^{k-1})$ time, where $c = (k,d,t, D)$.

\end{corollary}

Let us remark that for fixed $\varepsilon > 0$ and for $k = 2$, the algorithm above runs in $O(n)$ time which is best possible.  Moreover, this is much faster than the best known deterministic algorithm for Szemer\'edi's regularity lemma for graphs, which runs in $O(n^2)$ time and cannot be improved \cite{KRT}.

By copying the proof of Theorem \ref{reg1} almost verbatim, using the same-type lemma of B\'ar\'any and Valtr in \cite{barany} instead of Lemma \ref{ramsey}, we have the following.

\begin{theorem}

For any integers $d,k \geq 1$, there is a $C = C(d,k)$ such that the following holds.  For each $0 < \varepsilon < 1/2$ and for any finite point set $P$ in $\RR^d$, there is an equitable partition $P = P_1\cup P_2\cup \cdots \cup P_K$, with $K$ at most $\varepsilon^{-C}$, such that all but at most $\varepsilon{K\choose k}$ $k$-tuple of parts $(P_{i_1},\ldots,P_{i_k})$ have same-type transversals.

\end{theorem}

\section{Property testing in semi-algebraic hypergraphs}\label{testing}

In this section, we apply the polynomial semi-algebraic regularity lemma, Theorem \ref{reg1}, to quickly distinguish between semi-algebraic objects that satisfy a property from objects that are far from satisfying it. In the first subsection, we restrict ourselves to testing {\em monotone} hypergraph properties. We then discuss and prove a result about easily testing {\em hereditary} properties of graphs. We conclude with a result on easily testing hypergraph hereditary properties.  All semi-algebraic hypergraphs we consider in this section are assumed to be $k$-uniform and equipped with a symmetric relation $E$.

\subsection{Testing monotone properties}

Let $\mathcal{P}$ be a monotone property of hypergraphs, and $\mathcal{H}$ be the family of minimal forbidden hypergraphs for $\mathcal{P}$. That is, $H \in \mathcal{H}$ if $H \not \in \mathcal{P}$, but every proper subhypergraph of $H$ is in $\mathcal{P}$. We say that a hypergraph $H$ has a {\em homomorphism} to another hypergraph $R$, and write $H \rightarrow R$, if there is a mapping $f:V(H) \rightarrow V(R)$ such that the image of every edge of $H$ is an edge of $R$.

We let $\mathcal{H}_r$ denote the family of hypergraphs $R$ on at most $r$ vertices for which there is a hypergraph $H \in \mathcal{H}$ with $H \rightarrow R$. Define $$\Psi_1(\mathcal{H},r)=\max_{R \in \mathcal{H}_r}\min_{H \in \mathcal{H},H \rightarrow R} |V(H)|.$$

The following result implies that we can easily test every monotone property $\mathcal{P}$ whose corresponding function $\Psi_1(\mathcal{H},r)$ grows at most polynomially in $r$. A simple example in which $\Psi_1(\mathcal{H},r)$ is constant is the case that the property $\mathcal{P}$ is $H$-freeness for a fixed hypergraph $H$, i.e., $\mathcal{P}$ is the family of $k$-uniform hypergraphs which do not contain $H$ as a subhypergraph.

\begin{theorem}\label{monotonetest}
Let $\mathcal{P}$ be a monotone property of hypergraphs, and $\mathcal{H}$ be the family of minimal forbidden hypergraphs for $\mathcal{P}$. Within the family $\mathcal{A}$ of semi-algebraic hypergraphs in $d$-space with description complexity $(t,D)$, the property $\mathcal{P}$ can be $\epsilon$-tested with vertex query complexity at most $8(r\Psi_1(\mathcal{H},r))^2$, where $r=(1/\epsilon)^c$ with $c=c(k,d,t,D)$ is the number of parts in the algebraic regularity lemma for hypergraphs as in Theorem \ref{reg1}.
\end{theorem}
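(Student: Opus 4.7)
The proposed tester is the obvious one: sample $s := 8r\Psi_1(\mathcal{H},r)$ vertices of $G$ uniformly at random, query all $\binom{s}{k}$ candidate edges among them, and accept iff the induced subhypergraph $G[S]$ lies in $\mathcal{P}$. One-sidedness is automatic because $\mathcal{P}$ is monotone and hence closed under taking induced subhypergraphs, so whenever $G\in\mathcal{P}$ the tester always accepts. The content of the proof is therefore to show that whenever $G\in\mathcal{A}$ is $\epsilon$-far from $\mathcal{P}$, the induced subhypergraph on $S$ contains some $H\in\mathcal{H}$ with probability at least $2/3$.

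The first step is to apply Theorem \ref{reg1} to $G$ with a slightly smaller approximation parameter (say $\epsilon/2$), obtaining an equitable partition $P=P_1\cup\cdots\cup P_K$ into $K\leq r$ parts in which all but an $\epsilon/2$-fraction of the $k$-tuples of parts are homogeneous. Delete from $G$ all edges that lie in non-homogeneous $k$-tuples to produce a cleaned hypergraph $G'$; the number of deleted edges is strictly less than $\epsilon n^k$, so $G'$ is $\epsilon$-close to $G$ and hence $G'\notin\mathcal{P}$. Let $R$ denote the reduced $k$-uniform hypergraph on $[K]$ whose edges are exactly those $k$-tuples $(i_1,\ldots,i_k)$ for which $P_{i_1}\times\cdots\times P_{i_k}\subseteq E(G')$. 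Monotonicity forces $G'$ to contain some minimal forbidden $H\in\mathcal{H}$, and the quotient map sending each vertex of $H$ to its containing part is a homomorphism $H\to R$. Hence $R\in\mathcal{H}_K\subseteq\mathcal{H}_r$, and by the definition of $\Psi_1$ there exists $H^*\in\mathcal{H}$ and a homomorphism $f\colon H^*\to R$ with $m:=|V(H^*)|\leq\Psi_1(\mathcal{H},r)$.

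The point of Theorem \ref{reg1} is that homogeneity upgrades $f$ automatically to a wealth of injective copies of $H^*$ inside $G'$: for any choice of distinct vertices $u_v\in P_{f(v)}$, one per $v\in V(H^*)$, the map $v\mapsto u_v$ is an injective embedding $H^*\hookrightarrow G'\subseteq G$, because every edge $\{v_1,\ldots,v_k\}$ of $H^*$ is sent to an edge of $R$, i.e.\ to a completely filled $k$-tuple of parts of $G'$. It therefore suffices to show that, with probability at least $2/3$, the random sample $S$ satisfies $|S\cap P_j|\geq|f^{-1}(j)|$ for every $j$ in the image of $f$, since Hall's theorem then furnishes the embedding $H^*\hookrightarrow G[S]$ and forces the tester to reject. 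For each such $j$, the count $|S\cap P_j|$ is concentrated around its mean $s/K\geq 8\Psi_1(\mathcal{H},r)\geq 8m\geq 8|f^{-1}(j)|$, so a multiplicative Chernoff bound yields $\Pr[|S\cap P_j|<|f^{-1}(j)|]\leq e^{-3m}$, and a union bound over the at most $m$ parts in the image of $f$ gives total failure probability $me^{-3m}\leq 1/3$.

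The main conceptual point, and the reason the bound is in terms of $\Psi_1(\mathcal{H},r)$ rather than the (possibly huge) size of the original witness $H\in\mathcal{H}$, is the passage from ``$G'$ contains some $H\in\mathcal{H}$'' to ``$G$ contains, with good probability, the much smaller $H^*$ that maps to $R$''. Once this is in place, the remaining issues (equitability of the partition so that $|P_j|\approx n/K$ is large enough to support $m$ distinct representatives, the trivial case $n\lesssim Km$ in which the tester reads the whole hypergraph, and absorption of all lost constant factors into the $c$ defining $r=(1/\epsilon)^c$) are mere bookkeeping.
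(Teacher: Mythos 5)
Your proposal is correct and follows essentially the same route as the paper: the same sampler, the same application of Theorem \ref{reg1} followed by deletion of edges in non-complete $k$-tuples, the same reduced hypergraph $R$ and use of $\Psi_1$ to pass to a small witness $H^*$, and the same concentration-plus-union-bound finish (the paper argues $R\in\mathcal{H}_r$ by contraposition rather than by exhibiting the quotient homomorphism, and runs the regularity lemma at parameter $\epsilon$ rather than $\epsilon/2$, but these are cosmetic differences).
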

\begin{proof}
Let $s=\Psi_1(\mathcal{H},r)$ and $v=8(rs)^2$. Consider the tester which samples $v$ vertices from a hypergraph $A \in \mathcal{A}$. It accepts if the induced subhypergraph on these $v$ vertices has property $\mathcal{P}$ and rejects otherwise. It suffices to show that if $A \in \mathcal{A}$ is $\epsilon$-far from satisfying $\mathcal{P}$, then with probability at least $2/3$, the tester will reject.

Consider an equitable partition $\mathcal{Q}:V(A)=V_1 \cup \ldots \cup V_{r'}$ of the vertex set of $A$ guaranteed by Theorem \ref{reg1} such that all but at most an $\epsilon$-fraction of the $k$-tuples of parts are homogeneous. Let $r'=|\mathcal{Q}|$ be the number of parts of the partition, so $r' \leq r$. Delete all edges of $A$ whose vertices go between parts which are not complete. By the almost homogeneous property of the partition, at most an $\epsilon$-fraction of the edges are deleted. Let $A'$ denote the resulting subhypergraph of $A$. As $\mathcal{P}$ is monotone, if $A \in \mathcal{P}$, then $A' \in \mathcal{P}$.  Let $R$ be the hypergraph on $[r']$, which has one vertex for each part in $\mathcal{Q}$, and a $k$-tuple $(i_1,\ldots,i_k)$ of (not necessarily distinct) vertices of $R$ forms an edge if and only if the corresponding $k$-tuple $V_{i_1},\ldots,V_{i_k}$ of parts are complete in $A'$ (and hence in $A$ as well). If $R \not \in \mathcal{H}_r$, then every hypergraph which has a homomorphism to $R$ is in $\mathcal{P}$ and hence $A' \in \mathcal{P}$. However, at most an $\epsilon$-fraction of the $k$-tuples are deleted from $A$ to obtain $A'$, and so $A$ is not $\epsilon$-far from satisfying $\mathcal{P}$, contradicting the assumption. Hence, $R \in \mathcal{H}_r$.  Since $R \in \mathcal{H}_r$, there is a hypergraph $H \in \mathcal{H}$ on at most $s=\Psi_1(\mathcal{H},r)$ vertices with $H \rightarrow R$. Consider such a homomorphism $f:V(H) \rightarrow V(R)$, and let $a_i=|f^{-1}(i)|$. If the sampled $v$ vertices contain at least $a_i$ vertices in $V_i$ for each $i$, then $H$ is a subgraph of the sampled vertices and hence the sampled vertices do not have property $\mathcal{P}$.

So we need to estimate the probability of the event that the sampled $v$ vertices contain $a_i$ vertices in $V_i$ for each $i$. For a particular $i$, the probability that the sampled $v$ vertices contain fewer than $a=a_i$ vertices in $V_i$ is $0$ if $a=0$ and is otherwise less then \begin{eqnarray*} (1-1/r')^{v-a}{v \choose a} < e^{-v/(2r')}v^a  < 1/(4s),\end{eqnarray*}
where we used the union bound and that apart from $a$ of the $v$ vertices being allowed to be in $V_i$, the other $v-a$ coordinates are not allowed to be in $V_i$, which has order $\frac{1}{r'}|V(A)|$.  In the last inequality we use that $a \leq s$.
Taking the union bound and summing over all $i$, the probability that the sampled set of $v$ vertices does not contain $a_i$ vertices in $V_i$ for at least one $i$ is at most $s \times 1/(4s)=1/4$. This completes the proof.
\end{proof}

\subsection{Testing hereditary properties of graphs}

We next state and prove a result which shows that typical hereditary properties of graphs are easily testable within semi-algebraic graphs.  We say that for a graph $H$ and a graph $R$ on $[r']$ with loops, there is an {\it induced homomorphism} from $H$ to $R$, and we write $H \rightarrow_{\textrm{ind}} R$, if there is a mapping $f:V(H) \rightarrow V(R)$ which maps edges of $H$ to edges of $R$, and every nonadjacent pair of distinct vertices of $H$ gets mapped to a nonadjacent pair in $R$. We write $H \not \rightarrow_{\textrm{ind}} R$ if $H \rightarrow_{\textrm{ind}} R$ does not hold.

Let $P = P_1 \cup \ldots \cup P_{r'}$ be a vertex partition of a semi-algebraic graph $G$. A key observation is that if we \emph{round} $G$ by the partition of $P$ and the graph $R$ with loops to obtain a graph $G'$ on the same vertex set as $G$ by adding edges to make $P_i, P_j$ complete if $(i,j)$ is an edge of $R$, and deleting edges to make $P_i,P_j$ empty if $(i,j)$ is not an edge of $R$ and we have that $H \not \rightarrow_{\textrm{ind}} R$, then $G'$ does not contain $H$ as an induced subgraph.

Let $\mathcal{P}$ be a hereditary graph property, and $\mathcal{H}$ be the family of minimal (induced) forbidden graphs for $\mathcal{P}$. That is, each $H \in \mathcal{H}$ satisfies $H \not \in \mathcal{P}$, but every proper induced subgraph $H'$ of $H$ satisfies $H' \in \mathcal{P}$. For a nonnegative integer $r$,
let $\mathcal{H}_r$ be the family of graphs $R$ on at most $r$ vertices for which there is at least one $H \in \mathcal{H}$ such that $H \rightarrow_{\textrm{ind}} R$.  As long as $\mathcal{H}_r$ is nonempty, define
$$\Psi_2(\mathcal{H},r) = \max_{R \in \mathcal{H}_r} \min_{H \in \mathcal{H}:H\rightarrow_{\textrm{ind}} R} |V(H)|.$$ If $\mathcal{H}_r$ is empty, then we define $\Psi_2(\mathcal{H},r) =1$. Note that $\Psi_2(\mathcal{H},r)$ is a monotonically increasing function of $r$.

We now state our main result for testing hereditary graph properties within semi-algebraic graphs. It implies that, if $\Psi_2(\mathcal{H},r)$ is at most polynomial in $r$, then $\mathcal{P}$ can be easily tested within the family of semi-algebraic graphs of constant description complexity, i.e., there is an $\epsilon$-tester with vertex query complexity $\epsilon^{-O(1)}$. A simple example for which $\Psi_2(\mathcal{H},r)$ is constant is the case that the property $\mathcal{P}$ is the family of graphs which do not contain an induced subgraph isomorphic to $H$, for some fixed graph $H$.

\begin{theorem}
\label{graphprop}
Let $\mathcal{P}$ be a hereditary property of graphs, and $\mathcal{H}$ be the family of minimal forbidden graphs for $\mathcal{P}$. Within the family $\mathcal{A}$ of semi-algebraic graphs in $d$-space with description complexity $(t,D)$, property $\mathcal{P}$ can be $\epsilon$-tested with vertex query complexity at most $(r\Psi_2(\mathcal{H},r))^C$, where $r=(1/\epsilon)^C$ with $C=C(d,t,D)$.
\end{theorem}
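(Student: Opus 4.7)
The plan is to follow the proof of Theorem \ref{monotonetest} but to arrange for an \emph{induced} copy of a minimal forbidden subgraph to land inside a random sample, rather than merely a (non-induced) copy. The tester will sample $v=(r\Psi_2(\mathcal{H},r))^{C}$ vertices of $V(G)$ uniformly at random and accept iff the induced subgraph on the sample lies in $\mathcal{P}$. Since $\mathcal{P}$ is hereditary, if $G\in\mathcal{P}$ then every induced subgraph of $G$, and in particular the sample, lies in $\mathcal{P}$, so the tester is automatically one-sided correct. It thus suffices to prove that when $G\in\mathcal{A}$ is $\epsilon$-far from $\mathcal{P}$, the sample fails to satisfy $\mathcal{P}$ with probability at least $2/3$.

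First I would apply Theorem \ref{reg1} with parameter $\epsilon/2$ to obtain an equitable partition $V(G)=V_1\cup\cdots\cup V_{r'}$ with $r'\leq (2/\epsilon)^{c}$ parts (for a constant $c=c(d,t,D)$), such that at most an $(\epsilon/2)$-fraction of pairs $(V_i,V_j)$ are non-homogeneous. Define the cluster graph $R$ on $[r']$ (loops allowed) by placing $(i,j)\in E(R)$ precisely when $(V_i,V_j)$ is homogeneously complete, and placing non-homogeneous pairs outside $E(R)$. Let $G^*$ be the graph on $V(G)$ rounded to $R$: $uv\in E(G^*)\iff (f(u),f(v))\in E(R)$, where $f$ is the part-index map. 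Then $G^*$ differs from $G$ only on vertex pairs inside non-homogeneous pairs of parts, hence on at most an $(\epsilon/2)$-fraction of pairs, so $G^*$ is $(\epsilon/2)$-far from $\mathcal{P}$ and in particular $G^*\notin\mathcal{P}$. Consequently $G^*$ contains some $H\in\mathcal{H}$ as an induced subgraph, and the map $v\mapsto f(v)$ is an induced homomorphism from $H$ to $R$, showing $R\in\mathcal{H}_r$. By the definition of $s:=\Psi_2(\mathcal{H},r)$ there is then some $H'\in\mathcal{H}$ with $|V(H')|\leq s$ and an induced homomorphism $\phi:V(H')\to V(R)$.

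The main obstacle is to pass from this abstract induced copy in $R$ to an induced copy of $H'$ inside the sample drawn from $G$ itself, not $G^*$: arbitrary representatives $u_v\in V_{\phi(v)}$ need not induce $H'$ in $G$ because adjacency across non-homogeneous pairs of parts is uncontrolled. To handle this I would invoke Lemma \ref{ramsey} (the semi-algebraic Ramsey-type lemma), applied either iteratively pair by pair or in its multipartite form, to refine the $s$ parts $V_{\phi(v)}$ to subsets $V'_{\phi(v)}\subseteq V_{\phi(v)}$ of size at least $\mu |V_{\phi(v)}|$ for some explicit $\mu=\mu(s,d,t,D)>0$, such that \emph{every} pair $(V'_{\phi(v)},V'_{\phi(w)})$ (including within-part pairs) is homogeneous and carries the adjacency dictated by $\phi$. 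With this refinement, any choice of representatives $u_v\in V'_{\phi(v)}$ induces exactly $H'$ in $G$, and a standard union-bound/Chernoff argument (as in Theorem \ref{monotonetest}) shows that $v=(r\Psi_2(\mathcal{H},r))^{C}$ uniform samples contain at least $|\phi^{-1}(i)|$ vertices of each $V'_i$ with probability $\geq 2/3$, certifying that the sample does not satisfy $\mathcal{P}$. The technical crux is quantitative bookkeeping: Lemma \ref{ramsey} must be applied so that $1/\mu$ combined with $r'$ yields a polynomial-in-$r\Psi_2(\mathcal{H},r)$ bound with exponent depending only on $d,t,D$---this is precisely where the polynomial (rather than tower-type) strength of the semi-algebraic regularity lemma from Section \ref{structure} is decisive.
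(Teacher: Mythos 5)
Your overall architecture (round to a cluster graph $R$, deduce $R\in\mathcal{H}_r$ from $\epsilon$-farness, pull back an induced homomorphism $H'\rightarrow_{\textrm{ind}}R$, then find an induced copy of $H'$ in the sample) matches the paper's, but the step you yourself flag as "the main obstacle" is where the argument breaks. Lemma \ref{ramsey} lets you refine a pair of parts to subsets between which the bipartite graph is homogeneous, i.e.\ complete \emph{or} empty --- you do not get to choose which. So you cannot refine $V_{\phi(v)}$ to subsets that "carry the adjacency dictated by $\phi$": if $\phi$ sends a non-adjacent pair of $H'$ to a non-homogeneous pair $(V_i,V_j)$ (a non-edge of your $R$), the refinement may well produce complete subsets, destroying the induced copy. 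The fix is to reverse the order of operations, which is exactly what the paper's Theorem \ref{reg7} (the "strong regularity lemma") packages: first extract subsets $Q_i\subset P_i$, one per part, such that \emph{all} pairs $Q_i,Q_j$ with $i\neq j$ are homogeneous, and only then define the cluster graph $R$ from the actual adjacencies among the $Q_i$. Then the pattern demanded by any induced homomorphism into $R$ is automatically the pattern realized by the $Q_i$, and no post hoc matching is needed. (Getting all pairs simultaneously homogeneous with each $|Q_i|\geq\delta|P|$ for polynomial $\delta$ is done by a second application of Theorem \ref{reg1} with parameter $1/K^4$ and a random choice of one refined part inside each $P_i$.)

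The second gap is within-part adjacency, which your proposal never resolves (and which also infects your definition of the loops of $R$ and the claim that $G^*$ differs from $G$ only across non-homogeneous pairs). An induced homomorphism may send two adjacent, or two non-adjacent, vertices of $H'$ to the same part, so you need the sampled vertices inside a single $Q_i$ to form a clique or an independent set as the loop at $i$ dictates. You cannot achieve this by shrinking $Q_i$ to an exactly homogeneous set of linear size --- semi-algebraic graphs of bounded complexity need not contain cliques or independent sets on a constant fraction of the vertices (e.g.\ a disjoint union of $\sqrt{n}$ cliques of size $\sqrt{n}$ is semi-algebraic). The paper instead arranges, via Lemma \ref{lemmacompletepartite}, that each $Q_i$ has internal edge density at most $\alpha$ or at least $1-\alpha$ with $\alpha=1/(4s^2)$, defines the loop at $i$ accordingly, and then argues \emph{probabilistically} that the $a_i$ sampled vertices in $Q_i$ form the required clique or independent set except with probability $\binom{a_i}{2}\alpha$, which sums to at most $1/8$. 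Without this density-threshold device (or some substitute for it), your tester has no guarantee of exhibiting an induced copy of $H'$, only a not-necessarily-induced one, and the proof does not close.
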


We show how this theorem can be established using the following ``strong regularity lemma'' for semi-algebraic graphs.

\begin{theorem}
\label{reg7}
For any $0<\alpha,\epsilon < 1/2$, any semi-algebraic graph $H = (P,E)$ in $d$-space with complexity $(t,D)$ has an equitable vertex partition $P = P_1\cup \cdots \cup P_{r'}$ with $r' \leq r=(1/\epsilon)^{c'}$ with $c'=c'(d,t,D)$ such that all but an $\epsilon$-fraction of the pairs $P_i,P_j$ are homogeneous. Furthermore, there are subsets $Q_i \subset P_i$ such that each pair $Q_i,Q_j$ with $i \not = j$ is complete or empty, and each $Q_i$ has density at most $\alpha$ or at least $1-\alpha$. Moreover, $|Q_i| \geq \delta |P|$ with $\delta=(\alpha\epsilon)^{c}$ with $c=c(d,t,D)$.
\end{theorem}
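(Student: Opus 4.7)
The first assertion follows directly: I apply Theorem~\ref{reg1} with $k=2$ and approximation parameter $\epsilon$ to $H$, yielding the equitable partition $P = P_1 \cup \cdots \cup P_{r'}$ with $r' \leq (1/\epsilon)^{c'}$ parts such that at most an $\epsilon$-fraction of the ordered pairs of parts is non-homogeneous, where $c' = c'(d,t,D)$ depends only on $d,t,D$ (since $k=2$ is fixed).

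For the subsets $Q_i$, I use a two-scale refinement. First I apply Theorem~\ref{reg1} a second time to $H$ with a much finer parameter $\epsilon'' := (\alpha \epsilon)^{C}$, where $C = C(d,t,D)$ is a sufficiently large constant, producing a partition $P = \tilde{P}_1 \cup \cdots \cup \tilde{P}_R$ with $R \leq (1/\epsilon'')^{c_1}$ parts and at most $\epsilon''$-fraction non-homogeneous pairs. Taking the common refinement with the coarse partition from the first step squares the number of parts at worst, which still keeps $R$ polynomial in $1/(\alpha\epsilon)$; so I may assume the fine partition refines the coarse one and that each $P_i$ contains at least $R/(2r')$ fine parts.

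Next I select, for each coarse part $P_i$, one fine sub-part $\tilde{P}_{a_i} \subset P_i$ by the probabilistic method. A uniformly random choice produces at most $O((r')^{4} \epsilon'')$ non-homogeneous cross pairs $(\tilde{P}_{a_i}, \tilde{P}_{a_j})$, $i \neq j$, in expectation, which is less than $1/2$ once $C$ is chosen large enough in terms of $c', c_1$; a deterministic assignment therefore exists making every such pair complete or empty. To arrange the internal density condition, within each $\tilde{P}_{a_i}$ I extract a further subset $Q_i$ with $|Q_i| \geq \alpha^{c_3} |\tilde{P}_{a_i}|$ whose induced density is at most $\alpha$ or at least $1-\alpha$. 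Since $Q_i \subset \tilde{P}_{a_i}$, the cross-homogeneity established in the previous step is preserved, and combining sizes gives $|Q_i| \geq \alpha^{c_3} \cdot |P|/R \geq (\alpha\epsilon)^{O(1)} |P|$, as required.

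The main obstacle is the last step, producing a subset of a semi-algebraic graph whose internal density is close to $0$ or $1$ while losing only a polynomial-in-$\alpha$ factor in size. Standard semi-algebraic Ramsey-type arguments (see~\cite{conlon}) give homogeneous subsets of size $n^c$ with $c<1$, which is not enough. The plan is a density-increment argument via iterated applications of Theorem~\ref{main0} (through Lemma~\ref{bialg}) to the bipartite semi-algebraic graph $(\tilde{P}_{a_i}, \tilde{P}_{a_i}, E)$: whenever the induced density falls in the middle range, locate a large complete-bipartite sub-pair $(A,B)$ and pass to a dense common-neighborhood refinement or to $A \cap B$, repeating until the density exits $(\alpha, 1-\alpha)$. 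Verifying that this strictly increments the density at each iteration and that the cumulative size loss remains polynomial in $\alpha$ after $O(\log(1/\alpha))$ iterations is the technical crux of the argument.
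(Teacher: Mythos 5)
Your first two steps match the paper's: one application of Theorem~\ref{reg1} for the coarse partition, a second application with a much finer parameter (the paper uses $\epsilon'=1/K^4$), and a random selection of one fine part inside each coarse part so that all cross pairs are homogeneous. The gap is in the final step, which you correctly identify as the crux but do not close: producing, inside each selected part, a subset of size $\alpha^{O(1)}|\tilde P_{a_i}|$ whose internal density leaves $(\alpha,1-\alpha)$. Your proposed density-increment iteration via Lemma~\ref{bialg} is not obviously viable: applying that lemma to $(\tilde P_{a_i},\tilde P_{a_i},E)$ yields sets $A,B$ with $A\times B\subset E$, but $A\cap B$ may be empty, and the internal density of $A$, of $B$, or of $A\cup B$ need not increase (a complete bipartite pair with $|A|\approx|B|$ only forces density about $1/2$ on the union). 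You give no monotone quantity that increments, so termination in $O(\log(1/\alpha))$ rounds with polynomial loss is unsubstantiated.

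The paper's resolution (Lemma~\ref{lemmacompletepartite}) is the idea you are missing, and it shows you dismissed the polynomial Ramsey property too quickly. One does not need a single near-homogeneous subset directly; instead one finds $h=2/\alpha$ subsets $A_1,\ldots,A_h$ of equal size at least $h^{-c}|P'|$ such that every pair $A_i,A_j$ ($i\neq j$) is complete, or every pair is empty. Then $Q=A_1\cup\cdots\cup A_h$ automatically has internal density at most $1/h\le\alpha$ or at least $1-1/h\ge 1-\alpha$, since only the within-$A_j$ pairs are uncontrolled and they constitute a $1/h$ fraction. To build the $A_j$: by Alon et al.~\cite{noga}, every induced subgraph on $h^C$ vertices contains a homogeneous set of order $h$; apply Theorem~\ref{reg1} with $\epsilon=\frac{1}{2h^C}$ and Tur\'an's theorem to the auxiliary graph of parts to get $h^C$ pairwise-homogeneous parts of size $h^{-O(C)}|P'|$ each; pick one vertex from each, extract a clique or independent set of size $h$ among these $h^C$ representatives, and take the $h$ parts they came from. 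All losses are polynomial in $h=2/\alpha$, giving the claimed $\delta=(\alpha\epsilon)^{O(1)}$.
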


We next prove Theorem \ref{graphprop} assuming Theorem \ref{reg7}. The rest of the subsection is then devoted to proving Theorem \ref{reg7}.
\smallskip

\noindent {\it Proof of Theorem \ref{graphprop}.} Let $s=\Psi_2(\mathcal{H},r)$ and $v=(rs)^C$ for an appropriate constant $C=C(d,t,D)$. Consider the tester which samples $v$ vertices from a graph $A=(P,E) \in \mathcal{A}$. It accepts if the induced subgraph on these $v$ vertices has property $\mathcal{P}$ and rejects otherwise. It suffices to show that if $A \in \mathcal{A}$ is $\epsilon$-far from satisfying $\mathcal{P}$, then with probability at least $2/3$, the tester will reject.

Consider an equitable partition $P=P_1 \cup \ldots \cup P_{r'}$ with $r' \leq r=(1/\epsilon)^{c'}$ of the vertex set of $A$ guaranteed by Theorem \ref{reg7} with the property that all but at most an $\epsilon$-fraction of the pairs of parts are homogeneous, and, with $\alpha=1/(4s^2)$, there are subsets $Q_i \subset P_i$ such that each pair $Q_i,Q_j$ with $i \not = j$ is complete or empty, each $Q_i$ has density at most $\alpha$ or at least $1-\alpha$, and $|Q_i| \geq \delta |P|$ with $\delta=(\alpha\epsilon)^{c}$, where $c=c(d,t,D)$. Let $R$ be the graph on $[r']$ with loops where $(i,j)$ is an edge of $R$ if and only if $Q_i,Q_j$ is complete to each other if $i \not = j$, and $(i,i)$ is an edge if the density in $Q_i$ is at least $1-\alpha$. Round $A$ by the partition of $P$ and the graph $R$ to obtain another graph $A'$. That is, $A'$ has the same vertex set as $A$, and we delete the edges between $P_i$ and $P_j$  if $(i,j)$ is not an edge of $R$ and add all possible edges between $P_i$ and $P_j$ if $(i,j)$ is an edge of $R$. The resulting graph $A'$ is homogeneous between every pair of parts and at most an $\epsilon$-fraction of the pairs of vertices were added or deleted as edges from $A$ to obtain $A'$.  This is because only an $\epsilon$-fraction of the pairs of parts of the partition of $P$ are not homogeneous, and only edges between pairs of nonhomogeneous pairs are added or deleted.

If $R \not \in \mathcal{H}_r$, then every graph which has an induced homomorphism to $R$ is in $\mathcal{P}$ and hence $A' \in \mathcal{P}$. However, at most an $\epsilon$-fraction of the pairs were added or deleted from $A$ to obtain $A'$, and so $A$ is not $\epsilon$-far from satisfying $\mathcal{P}$, contradicting the assumption. Hence, $R \in \mathcal{H}_r$, and there is a graph $H \in \mathcal{H}$ on at most $s=\Psi_2(\mathcal{H},r)$ vertices with $H \rightarrow_{\textrm{ind}} R$. Consider such an induced homomorphism $f:V(H) \rightarrow V(R)$, and let $a_i=|f^{-1}(i)|$. If among the sampled $v$ vertices there are at least $a_i$ vertices from $Q_i$ for each $i$, and these $a_i$ vertices form a clique if $(i,i)$ is a loop in $R$ and otherwise they form an independent set, then $H$ is an induced subgraph of the sampled set of vertices and hence the subgraph induced by the sampled set does not have property $\mathcal{P}$.

We first estimate the probability of the event that the sampled set of $v$ vertices contains $a_i$ vertices in $Q_i$ for each $i$. For a particular $i$, the probability that the sampled set of $v$ vertices contains fewer than $a=a_i$ vertices in $Q_i$ is $0$ if $a=0$ and is otherwise less then \begin{eqnarray*}(1-\delta)^{v-a}{v \choose a} < e^{-\delta v/2}v^a < 1/(8s),\end{eqnarray*}
where we used that $a<s$ and $v$ can be chosen so that $v>10(s/\delta)^2$.
Taking the union bound and summing over all $i$, the probability that the sampled $v$ vertices do not contain $a_i$ vertices in $Q_i$ for some $i$ is at most $s \times 1/(8s)=1/8$.

We now condition on the event that we have at least $a_i$ vertices chosen from $Q_i$ for each $i$. The probability that these $a_i$ vertices do not form a clique if $(i,i)$ is a loop and an independent set if $(i,i)$ is not a loop is at most ${a_i \choose 2}\alpha$. Summing over all $i$, the probability that, for every $i$, the $a_i$ vertices in $Q_i$ form a clique if $(i,i)$ is a loop and an independent set if $(i,i)$ is not a loop, is at least $$1-\sum_i {a_i \choose 2}\alpha \geq 1-{s \choose 2}\alpha \geq 7/8.$$
Hence, with probability at least $3/4$, the induced subgraph on the sampled set of $v$ vertices has the desired properties, which completes the proof.  $\hfill\square$

\smallskip

Our goal for the rest of the subsection is to prove Theorem \ref{reg7}. We first prove a Ramsey-type lemma which states that semi-algebraic graphs contain large balanced complete or empty $h$-partite subgraphs.

\begin{lemma}\label{lemmacompletepartite}
For every $d,t,$ and $D$, there is a constant $c=c(d,t,D)$ satisfying the following condition. For any positive integer $h$, any semi-algebraic graph $G = (P,E)$ in $d$-space with complexity $(t,D)$ has vertex subsets $A_1,\ldots,A_h$ with $|A_1|=\cdots=|A_h| \geq h^{-c}|P|$ such that every pair $A_i,A_j$ with $i \not = j$ is complete or none of them are.
\end{lemma}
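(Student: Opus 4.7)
The plan is to recursively halve $P$ via the bipartite ($k=2$) case of Theorem \ref{main0} (equivalently, Lemma \ref{bialg}), producing the $h$ subsets as the leaves of a balanced binary tree $T$ of depth $\lceil\log_2 h\rceil$. First, by replacing $G$ with its complement if necessary---which is still semi-algebraic of complexity $(t,D)$ via a negated Boolean formula---we may assume that the edge density of $G$ is at least $1/2$ and aim to produce $h$ pairwise \emph{complete} subsets; the alternative admissible conclusion of the lemma (all pairs empty) arises from running the same argument on the complement.

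Construct $T$ with the root labeled by $P$. At each internal node with associated set $W \subseteq P$, arbitrarily partition $W = W_L \cup W_R$ into equal halves, and apply Lemma \ref{bialg} with $\varepsilon = 1/2$ to the bipartite graph $G[W_L, W_R]$: provided the bipartite density between $W_L$ and $W_R$ is at least $1/2$, this yields $W_L' \subseteq W_L$ and $W_R' \subseteq W_R$ with $|W_L'|, |W_R'| \geq \delta |W|/2$ and $W_L' \times W_R' \subseteq E$, where $\delta = \delta(d,t,D) > 0$ is an absolute constant (namely the output of setting $\varepsilon = 1/2$ in the $k=2$ case of Theorem \ref{main0}). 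Assign $W_L'$ and $W_R'$ to the two child nodes and recurse.

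After $\lceil\log_2 h\rceil$ levels the $h$ leaves yield subsets $A_1, \ldots, A_h$ with common size (after trimming to the smallest) at least $(\delta/2)^{\log_2 h}\,|P| = h^{-c}\,|P|$, where $c = \log_2(2/\delta) = c(d,t,D)$. Pairwise completeness is immediate: for any $i \neq j$, the least common ancestor of $A_i, A_j$ in $T$ is an internal node whose ``complete'' split placed the respective ancestors of $A_i$ and $A_j$ in opposite halves $W_L', W_R'$, hence $A_i \times A_j \subseteq W_L' \times W_R' \subseteq E$.

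The main obstacle is the \emph{consistency of the splits}: at every internal node the density theorem must actually produce a complete (rather than empty) sub-bipartite graph, which requires the bipartite density between $W_L$ and $W_R$ to be at least $1/2$ at every internal node. This hypothesis can fail on deep sub-scales even when $G$ has global density $\geq 1/2$ (for example, in $K_{n/2, n/2}$ every subset of one side has internal density zero). To address this, one first preprocesses $G$ using the polynomial regularity lemma (Theorem \ref{reg1}) at a parameter $\eta = h^{-\Omega(1)}$ to extract a ``hereditarily dense'' semi-algebraic sub-kernel $P^\ast \subseteq P$ of size $\Omega(|P|)$, within which the required bipartite density lower bound holds at every sub-scale encountered by the recursion; the recursive halving is then carried out inside $P^\ast$. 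When no such dense kernel exists (as in the complete bipartite example), the graph admits a large hereditarily sparse kernel instead, on which the dual sparsity-oriented recursion produces $h$ pairwise empty subsets of size $\geq h^{-c}|P|$, which is also admissible.
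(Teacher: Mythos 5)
Your recursive-halving skeleton is sound as far as it goes (the least-common-ancestor argument for pairwise completeness and the size bound $(\delta/2)^{\log_2 h}|P|=h^{-c}|P|$ are both correct), but the proof has a genuine gap exactly at the point you flag yourself: you never establish that the bipartite density between $W_L$ and $W_R$ stays at least $1/2$ at every internal node, nor that all splits can be made of the same type. The fix you propose --- extract a linear-size ``hereditarily dense'' kernel $P^\ast$, or else a ``hereditarily sparse'' one --- is an assertion, not a proof. Theorem \ref{reg1} does not produce such a kernel: it gives a partition in which most \emph{pairs of parts} are complete or empty, which says nothing about the density of an arbitrary sub-bipartition of a single part, and your own example $K_{n/2,n/2}$ already shows that after one complete split the recursion can land in a set of internal density $0$. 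The claimed dichotomy (``no hereditarily dense kernel implies a hereditarily sparse kernel exists'') is precisely the content that needs an argument, and none is given; without it the tree can mix complete and empty splits, which violates the required conclusion that every pair $A_i,A_j$ is of the same type.

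The paper closes this consistency issue by a different and shorter route. It first invokes the polynomial Ramsey theorem for semi-algebraic graphs from \cite{noga}: every induced subgraph on $h^C$ vertices contains a clique or an independent set of size $h$. It then applies Theorem \ref{reg1} with $\epsilon=\frac{1}{2h^C}$ and Tur\'an's theorem to the auxiliary graph of homogeneous pairs to find $h^C$ parts of the partition that are pairwise homogeneous (each pair complete or empty, possibly mixed). Picking one representative vertex from each of these parts and applying the Ramsey statement to the resulting $h^C$-vertex induced subgraph selects $h$ parts whose pairwise relations are simultaneously all complete or all empty; trimming the parts to equal size finishes the proof, with the $h^{-c}|P|$ bound coming from the polynomial number of parts in the regularity partition. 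If you want to salvage your recursion you would need to prove the kernel dichotomy, which appears at least as hard as the lemma itself; the Ramsey-plus-Tur\'an selection is the standard way to enforce the ``same type for all pairs'' condition, and it is the single global Ramsey step (rather than many local density steps) that makes the consistency automatic.
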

\begin{proof}
For $h=1$, the result is trivial by taking $A_1=P$. Thus, we may assume $h \geq 2$. It is shown in \cite{noga} that there is a constant $C  = C(d,t,D)$ such that every induced subgraph of $G$ on $h^C$ vertices contains a clique or an independent set of order $h$.  Applying Theorem \ref{reg1} with $\epsilon=\frac{1}{2h^C}$, we obtain an equitable partition with $\epsilon^{-O(1)}$ parts such that all but at most a $\frac{1}{2h^C}$-fraction of the pairs of parts are homogeneous. Applying Tur\'an's theorem to the auxiliary graph with a vertex for each part and an edge between each homogeneous pair, we obtain $h^C$ parts that are pairwise homogeneous.  Picking one vertex from each of these parts, we obtain an induced subgraph of $G$ on $h^C$ vertices, and by the discussion above, there is an induced subgraph with $h$ vertices which is complete or empty. The parts these vertices come from (after possibly deleting a vertex from some parts to guarantee that they have the same size) have the desired properties.
\end{proof}

We next prove Theorem \ref{reg7}, a strengthening of our quantitative semi-algebraic regularity lemma, via three applications of Theorem \ref{reg1}.

\smallskip

\noindent {\it Proof of Theorem \ref{reg7}.}
We will apply Theorem \ref{reg1} three times. We first apply Theorem \ref{reg1} to obtain a partition $P = P_1\cup \cdots \cup P_K$ with $K=\epsilon^{-O(1)}$ with the implied constant depending on $d,t,$ and $D$, such that all but an $\epsilon$-fraction of the pairs $P_i,P_j$ are homogeneous. We apply Theorem \ref{reg1} again (or rather its proof) to get a refinement with approximation parameter $\epsilon'=1/K^4$, so that all but an $\epsilon'$-fraction of the pairs of parts are homogeneous. Thus, with a positive probability, a random choice of parts $W_1,\ldots,W_K$ of this refinement with $W_i \subset P_i$ has the property that $W_i,W_j$ is homogeneous for all $i \not = j$.  Indeed, at most a fraction $K^2\epsilon'=1/K^2$ of the pairs $W_i \subset P_i$, $W_j \subset P_j$ are not homogeneous, and so, by linearity of expectation, this probability is at least $1-{K \choose 2}\frac{1}{K^2}>1/2>0$.  From Lemma \ref{lemmacompletepartite},  applied to the subgraph induced by $W_i$ for each $i$ with $h=2/\alpha$, we obtain subsets $Q_i \subset W_i$ such that $$|Q_i| \geq \left(\frac{\alpha}{2}\right)^{O(1)}|W_i| \geq \left(\frac{\alpha}{2}\right)^{O(1)}(K^{-4})^{O(1)}|P_i| \geq \delta |P|,$$ with $\delta=(\alpha \epsilon)^c$ for an appropriate choice of $c=c(d,t,D)$, each $Q_i$ is a complete or empty balanced $h$-partite graph, so that the density in $Q_i$ is at most $\alpha$ or at least $1-\alpha$.  This completes the proof.~$\hfill\square$

\subsection{Testing hereditary properties of hypergraphs}

We next state and prove the hereditary property testing result for semi-algebraic hypergraphs.

Let $R$ be a $k$-uniform hypergraph with vertex set $[r']$, and $B$ be a {\em blow-up} of $R$ with vertex sets $V_1,\ldots,V_{r'}$. That is, $B$ is a $k$-uniform hypergraph on $V_1 \cup \ldots \cup V_{r'}$, where $(v_1,\ldots,v_k) \in (V_{i_1},\ldots,V_{i_k})$ is an edge if and only if $(i_1,\ldots,i_k)$ form an edge of $R$. An extension of $B$ (with respect to $V_1,\ldots,V_{r'}$) is any hypergraph on $V_1 \cup \ldots \cup V_{r'}$ which agrees with $B$ on the $k$-tuples with vertices in distinct $V_i$.

For a hypergraph $H$, we say that $R$ is {\it extendable $H$-free} if each blow-up of $R$ has an extension which contains no induced copy of $H$.  For a family $\mathcal{H}$ of hypergraphs, we say that $R$ is {\it extendable $\mathcal{H}$-free} if each blow-up of $R$ has an extension which contains no induced $H \in \mathcal{H}$.

For a hypergraph property $\mathcal{P}$, we say that $R$ {\em strongly has property} $\mathcal{P}$ if every blow-up of $R$ has an extension which has property $\mathcal{P}$. Otherwise, there are a smallest $s=s(\mathcal{P},R)$ and vertex sets $V_1,\ldots,V_{r'}$ with $r' \leq r$ and $s(\mathcal{P},R)=|V_1|+\cdots+|V_{r'}|$ such that no extension of the blow-up $B$ of $R$ with vertex sets $V_1,\ldots,V_{r'}$ has property $\mathcal{P}$.

 Define $\Psi_3(\mathcal{P},r)$ to be the maximum of $s(\mathcal{P},R)$ over all $R$ with at most $r$ vertices which do not strongly have property $\mathcal{P}$.

Our next theorem is about hereditary property testing for semi-algebraic hypergraphs. It implies that, if $\Psi_3(\mathcal{H},r)$ is at most polynomial in $r$, then $\mathcal{P}$ can be easily tested within the semi-algebraic hypergraphs of constant description complexity, i.e., there is an $\epsilon$-tester with vertex query complexity $\epsilon^{-O(1)}$. A simple example in which $\Psi_3(\mathcal{H},r)$ is constant is the case that the property $\mathcal{P}$ is the family of hypergraphs which are induced $H$-free for some fixed hypergraph $H$.

\begin{theorem}
\label{hypergraphprop2}
Let $\mathcal{P}$ be a hereditary property of hypergraphs. Within the family $\mathcal{A}$ of semi-algebraic $k$-uniform hypergraphs in $d$-space with description complexity $(t,D)$, the property $\mathcal{P}$ can be $\epsilon$-tested with vertex query complexity at most $r^C\Psi_3(\mathcal{H},r)^C$, where $r=(1/\epsilon)^C$ with $C=C(d,t,D)$.
\end{theorem}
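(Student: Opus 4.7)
The plan is to adapt the proof of Theorem \ref{graphprop} to $k$-uniform hypergraphs and to the indicator $\Psi_3$, which permits arbitrary ``extensions'' on $k$-tuples with repeated part indices rather than the nearly clique/anticlique within-part structure used in the graph case. The tester samples $v = r^C \Psi_3(\mathcal{H},r)^2$ vertices uniformly at random from the input hypergraph $A\in\mathcal{A}$ and accepts if and only if the induced subhypergraph has property $\mathcal{P}$; since $\mathcal{P}$ is hereditary this is automatically a one-sided test, so it suffices to show that whenever $A$ is $\epsilon$-far from $\mathcal{P}$ the test rejects with probability at least $2/3$.

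First I would produce a ``strong'' version of Theorem \ref{reg1} for hypergraphs by the two-stage iteration appearing in the proof of Theorem \ref{reg7}: apply Theorem \ref{reg1} with parameter $\epsilon'=\epsilon/(8k^2)$ to obtain an equitable partition $P=P_1\cup\cdots\cup P_{r'}$ with $r'\le r=(1/\epsilon)^C$ in which all but an $\epsilon'$-fraction of $k$-tuples of parts are homogeneous; then apply (the proof of) Theorem \ref{reg1} a second time with parameter $\epsilon''=r^{-4k}$ to obtain a refinement, and use a random-choice argument to pick a single sub-part $W_i\subset P_i$ for each $i$ so that every $k$-tuple $(W_{i_1},\ldots,W_{i_k})$ with pairwise distinct indices is homogeneous and $|W_i|\ge\delta|P|$ for some $\delta=\epsilon^{O(1)}$. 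Define the reduced hypergraph $R$ on $[r']$ by declaring $\{i_1,\ldots,i_k\}$ an edge precisely when $(W_{i_1},\ldots,W_{i_k})$ is complete in $A$.

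The dichotomy step is the crux. Suppose first that $R$ strongly has $\mathcal{P}$, so the blow-up of $R$ with part sizes $|P_1|,\ldots,|P_{r'}|$ admits an extension $A^\star\in\mathcal{P}$. The hypergraph $A^\star$ can differ from $A$ only on (i) $k$-tuples lying inside a non-homogeneous $k$-tuple of parts (at most an $\epsilon'$-fraction of all $k$-tuples by regularity), and (ii) $k$-tuples with at least two vertices in a common part (an $O(k^2/r')$-fraction, which is $\le\epsilon/2$ for our choice of $r'$). Their sum is strictly below $\epsilon$, contradicting that $A$ is $\epsilon$-far from $\mathcal{P}$. Hence $R$ does not strongly have $\mathcal{P}$, so by the definition of $\Psi_3$ there exist integers $a_1,\ldots,a_{r'}$ with $a_1+\cdots+a_{r'}\le s:=\Psi_3(\mathcal{H},r)$ such that no extension of the blow-up of $R$ with these part sizes belongs to $\mathcal{P}$.

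The remaining sampling step is the same in spirit as in the proofs of Theorems \ref{monotonetest} and \ref{graphprop}: since $|W_i|\ge\delta|P|$ with $\delta=\epsilon^{O(1)}$ and $v=r^C s^2$ is polynomial in both $r$ and $s$, a union bound over the $r'$ parts shows that with probability at least $2/3$ the sample contains at least $a_i$ vertices in $W_i$ for every $i$. On this event, the chosen $a_1+\cdots+a_{r'}\le s$ vertices induce a subhypergraph whose distinct-part $k$-tuples agree with the blow-up of $R$ (by the defining homogeneity of the $W_i$'s), while the $k$-tuples with repeated part indices are arbitrary---that is, an extension of the blow-up of $R$ with sizes $a_1,\ldots,a_{r'}$. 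By choice of the $a_i$ this extension is not in $\mathcal{P}$, and since $\mathcal{P}$ is hereditary the full $v$-vertex induced subhypergraph is not in $\mathcal{P}$ either, so the tester rejects. The main obstacle is the accounting in the dichotomy step: the extension formulation forces agreement only on distinct-part $k$-tuples, so one has to verify that the within-part error $O(k^2/r')$ is indeed absorbed by the choice $r'=(1/\epsilon)^C$ and then balance $\epsilon'$, $\epsilon''$, and $r$ so that every error term is comfortably below its $\epsilon$-budget---a subtlety that does not arise in Theorem \ref{graphprop}, where the subsets $Q_i$ were additionally forced to have density close to $0$ or $1$.
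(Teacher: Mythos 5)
Your proposal follows essentially the same route as the paper: a two-stage application of Theorem \ref{reg1} to get a strong regularity partition with homogeneous distinct-part $k$-tuples among large subsets $W_i$ (this is the paper's Lemma \ref{reg23}), the reduced hypergraph $R$, the dichotomy on whether $R$ strongly has $\mathcal{P}$ via the extension/blow-up definition of $\Psi_3$, and the same union-bound sampling argument. Your explicit accounting of the within-part $k$-tuples in the dichotomy step is a point the paper passes over silently, but it is not a different method, just a more careful write-up of the same proof.
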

We will need the following polynomial strong regularity lemma for semi-algebraic hypergraphs.

\begin{lemma}\label{reg23}
For any $0< \epsilon < 1/2$, any semi-algebraic $k$-uniform hypergraph $H = (P,E)$ in $d$-space with complexity $(t,D)$ has an equitable vertex partition $P = P_1\cup \cdots \cup P_{r'}$ such that all but an $\epsilon$-fraction of the $k$-tuples of distinct parts are homogeneous. Furthermore, there are subsets $Q_i \subset P_i$ for each $i$ such that every $k$-tuple of distinct parts is homogeneous and $|Q_i| \geq \delta |P|$ with $\delta=\epsilon^{c}$, where $c=c(k,d,t,D)$.
\end{lemma}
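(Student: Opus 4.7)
The plan is to iterate our polynomial regularity lemma (Theorem \ref{reg1}) twice, in direct analogy with the proof of Theorem \ref{reg7} above, but omitting its ``inner'' density step, since the conclusion of Lemma \ref{reg23} only concerns $k$-tuples coming from pairwise distinct parts (the hypothesis parameter $\alpha$ does not enter the conclusion and plays no role here).

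First I would apply Theorem \ref{reg1} to $H$ with approximation parameter $\epsilon$, producing an equitable partition $P = P_1 \cup \cdots \cup P_{r'}$ with $r' \leq (1/\epsilon)^{c_1}$ and $c_1 = c_1(k,d,t,D)$, such that all but an $\epsilon$-fraction of the $k$-tuples of parts are homogeneous. This already gives the first conclusion of the lemma.

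Next I would apply Theorem \ref{reg1} a second time, with a much smaller approximation parameter $\epsilon' := (2r')^{-(k+1)}$, arranged (as in the proof of Theorem \ref{reg}) so as to yield a refinement of the first partition, say $P = R_1 \cup \cdots \cup R_{r''}$ with $r'' \leq (1/\epsilon')^{c_1}$, in which at most an $\epsilon'$-fraction of the $k$-tuples of refinement parts are non-homogeneous; each $P_i$ then contains $r''/r'$ refinement parts, each of size $|P|/r''$. For each $i \in [r']$, choose a refinement part $W_i \subset P_i$ uniformly at random and independently across $i$. For any distinct indices $i_1, \ldots, i_k \in [r']$ and any fixed tuple of refinement parts $V_1 \subset P_{i_1}, \ldots, V_k \subset P_{i_k}$, the probability that $(W_{i_1}, \ldots, W_{i_k}) = (V_1, \ldots, V_k)$ is exactly $(r'/r'')^k$; hence the expected number of ordered $k$-tuples of distinct indices $(i_1, \ldots, i_k)$ for which $(W_{i_1}, \ldots, W_{i_k})$ is non-homogeneous is at most
$$\epsilon' (r'')^k \cdot (r'/r'')^k \;=\; \epsilon' (r')^k \;<\; 1/2,$$
by the choice of $\epsilon'$. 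Fixing a realization with no bad $k$-tuple and setting $Q_i := W_i$ yields the required subsets; moreover $|Q_i| \geq |P|/r'' \geq (\epsilon')^{c_1}|P| \geq \epsilon^{c}|P|$ for a suitable $c = c(k,d,t,D)$.

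The step that requires the most care is arranging the second application of Theorem \ref{reg1} to genuinely \emph{refine} the first partition. This is handled exactly as in the proof of Theorem \ref{reg7}, by re-entering the proof of Theorem \ref{reg} with the semi-algebraic family $\mathcal{F}_r$ produced in the first pass as a seed; the remaining probabilistic argument and the size bound then come for free once $\epsilon'$ is chosen polynomially small in $1/r'$.
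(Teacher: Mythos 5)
Your proposal is correct and follows essentially the same route as the paper: two applications of Theorem \ref{reg1}, the second with an approximation parameter polynomially small in the number of parts of the first (the paper uses $\epsilon' = 1/K^{2k}$, you use $(2r')^{-(k+1)}$), followed by a random choice of one refinement part inside each $P_i$ and a first-moment argument. Your write-up actually spells out the expectation computation and the refinement issue more explicitly than the paper does.
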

\begin{proof}
The proof follows the graph case, as in Theorem \ref{reg7}, and involves two applications of Theorem \ref{reg1}. First, we apply Theorem \ref{reg1} to obtain a partition $P = P_1\cup \cdots \cup P_K$ with $K=\epsilon^{-O(1)}$, where the implied constant depends on $k,d,t,D,$ such that all but an $\epsilon$-fraction of the $k$-tuples of parts are homogeneous. We apply Theorem \ref{reg1} (or rather its proof) again to get a refinement with approximation parameter $\epsilon'=1/K^{2k}$, so that all but an $\epsilon'$-fraction of the $k$-tuples of parts are homogeneous. Thus, with positive probability, a random choice of parts $Q_1,\ldots,Q_K$ of this refinement with $Q_i \subset P_i$ has the property that each $k$-tuple $Q_{i_1},\ldots,Q_{i_k}$ of distinct parts is homogeneous. This completes the proof.
\end{proof}

\noindent {\it Proof of Theorem \ref{hypergraphprop2}.} Let $s=\Psi_3(\mathcal{P},r)$ and $v=(rs)^C$ for an appropriate constant $C=C(k,d,t,D)$. Consider the tester which samples $v$ vertices from a graph $A=(P,E) \in \mathcal{A}$. It accepts if the induced subgraph on these $v$ vertices has property $\mathcal{P}$ and rejects otherwise. It suffices to show that if $A \in \mathcal{A}$ is $\epsilon$-far from satisfying $\mathcal{P}$, then with probability at least $2/3$, the tester will reject.

Consider an equitable partition $P=P_1 \cup \ldots \cup P_{r'}$ of the vertex set of $A$ guaranteed by Lemma \ref{reg23} so that all but at most an $\epsilon$-fraction of the pairs of parts are homogeneous, and subsets $Q_i \subset P_i$ such that every $k$-tuple of distinct $Q_i$ is homogeneous, and $|Q_i| \geq \delta |P|$ for each $i$ with $\delta=\epsilon^{c}$, where $c=c(k,d,t,D)$. Let $R$ be the $k$-uniform hypergraph on $[r']$, where a $k$-tuple $(i_1,\ldots,i_k)$ of distinct vertices forms an edge if and only if $Q_{i_1},\ldots,Q_{i_k}$ is complete. If $R$ strongly has property $\mathcal{P}$, then $A$ is $\epsilon$-close to a hypergraph which has property $\mathcal{P}$, and hence the algorithm accepts in this case. Thus, we may assume that $R$ does not strongly have property $\mathcal{P}$ so that there are sets $V_1,\ldots,V_{r'}$ with $|V_1|+\cdots+|V_{r'}| = s(\mathcal{P},R) \leq \Psi_3(\mathcal{P},r)=s$ such that every extension of the blow-up of $R$ with parts $V_1,\ldots,V_{r'}$ does not have property $\mathcal{P}$. Thus, if among the $v$ sampled vertices, we get for every $i$ at least $|V_i|$ vertices in $Q_i$, then the subgraph induced by the sampled vertices does not have property $\mathcal{P}$ and the algorithm rejects. Therefore, it suffices to show that with probability at least $2/3$, we get at least $|V_i|$ vertices in each $Q_i$. However, this is the computation we already did in the proof of Theorem \ref{graphprop}. It is sufficient, for example, to assume that $v>10(s/\delta)^2$, and we can take $v$ to satisfy this condition.
$\hfill\square$

\medskip

\noindent \textbf{Acknowledgments.}  We would like to thank the anonymous referees of the conference version \cite{FoPaSu} for their helpful comments, including an improvement of the exponent in the bound in Theorem~\ref{pach}.

 \end{document}